\definecolor{blue}{rgb}{0,0.0,0.9}
\def\eps{\varepsilon}
\def\C{{\mathbb C}}
\def\R{{\mathbb R}}
\def\d{{\mathrm d}}
\def\tr|{|\!|\!|}
\theoremstyle{plain}% default
 \newtheorem{theorem}{Theorem}[section]
 \newtheorem{lemma}{Lemma}[section]
 \theoremstyle{definition}
 \newtheorem{remark}{Remark}[section]
\numberwithin{equation}{section}
\numberwithin{table}{section}
\numberwithin{figure}{section}
\newcommand{\QED}{\hfill $\square$}
\begin{document}

%Runge--Kutta time discretization of nonlinear parabolic equations
\title[]
{Runge--Kutta time discretization of\\ nonlinear parabolic equations studied via discrete maximal parabolic regularity} 
%\titlerunning{Runge--Kutta time discretization of nonlinear parabolic equations}

%
%
\author[]{Peer C. Kunstmann}
\address{Institut f\"ur Analysis, KIT, 76131 Karlsruhe, Germany.} 
\email {\sf{peer.kunstmann@kit.edu} }

\author[]{Buyang Li}
\address{Department of Applied Mathematics,  
           The Hong Kong Polytechnic University,  
           Hung Hom, Kowloon, Hong Kong.} 
\email {\sf{buyang.li@polyu.edu.hk} }

\author[]{Christian Lubich}
\address{Mathematisches Institut, Universit\"at T\"ubingen, Auf der Morgenstelle, 
D-72076 T\"ubingen, Germany} 
\email {\sf{lubich{\it @}na.uni-tuebingen.de} }

%\thanks{\\
%The work of G.A.\ was partially supported by GSRT-ESET ``Excellence'' 
%grant 1456.} 

%\keywords{Runge--Kutta method, maximal parabolic regularity, 
%nonlinear parabolic equation, gradient flow, stability}
%\subjclass[2010]{Primary 65M12, 65M60; Secondary 65L06.}

%\date{}
%\date{\today}
\maketitle

\begin{abstract}
For a large class of fully nonlinear parabolic equations, which include gradient flows for energy functionals that depend on the solution gradient, the semidiscretization in time by implicit Runge--Kutta methods such as the Radau IIA methods of arbitrary order is studied. Error bounds are obtained in the $W^{1,\infty}$ norm uniformly on bounded time intervals and, with an improved approximation order, in the parabolic energy norm. The proofs rely on discrete maximal parabolic regularity. This is used to obtain $W^{1,\infty}$  estimates, which are the key to the numerical analysis of these problems.\\

\noindent Keywords. Runge--Kutta method, maximal parabolic regularity, 
nonlinear parabolic equation, gradient flow, stability, error bounds. \\

\noindent AMS subject classification. 65M12, 65M60. 
\end{abstract}

%***************%%%%%%%%%%%%%%%%%%%%%%%%%%%%%%%%%%%%%
%%%%%%%%%%%%%%%%%%%%%%%%%%%%%%%%%%%%%%%%%%%%%%%%%%%%%

%%%%%%%%%%%%%%%%%%%%%%%%%%%%%%%%%%%%%%%%%%

%\centerline{\it Dedicated to Michel Crouzeix on the occasion of his 70th birthday.}
%\bigskip\bigskip
%{\it En l'honneur de Michel Crouzeix \`a 
%l'occasion de son soixante-dixi\`eme anniversaire}
%\bigskip

\maketitle

\section{Introduction}\label{Se:intr}
This paper is concerned with the stability and error analysis of implicit Runge--Kutta time discretizations of nonlinear parabolic initial-boundary value problems for $u=u(x,t)$,
%\subsection{Initial and boundary value problem}\label{SSe:ivp}
\begin{equation}
\label{ivp}
\frac{\partial u}{\partial t}  
=
\nabla\cdot f(\nabla u,u) , 
\qquad  x\in\varOmega,  \ 0<t\le T,
\end{equation}
on a given bounded smooth domain $\varOmega\subset \R^d$ of arbitrary dimension $d\ge 1$
and for a given final time $T>0$, taken with homogeneous Dirichlet boundary conditions $u=0$ on $\partial\varOmega\times[0,T]$
and with given initial data $u(\cdot,0)=u_0$ on $\varOmega$.

The flux function
$f:\R^d\times \R\rightarrow \R^d$ is assumed to be a smooth function  
satisfying a local ellipticity condition: for every $(p,u)\in\R^d\times \R$, 
%for any $R>0$ there exists a positive constant $K_R$ such that 
the matrix %(generally non-symmetric) matrix 
\begin{equation}\label{locally-elliptic}
\text{$\partial_p f(p,u)\in\R^{d\times d}$ has a positive definite symmetric part.}
\end{equation}
We do not require uniform ellipticity: some eigenvalues of the symmetric part  $\frac12 (\partial_p f(p,u)+\partial_p f(p,u)^T)$ may tend to $0$ or $+\infty$ as $|(p,u)|\to\infty$.
%, and the norm of the antisymmetric part may grow without bound as $|(p,u)|\to\infty$.

We will, however, assume that the initial-boundary value problem admits a sufficiently regular solution, and we ask for stability and rates of convergence of time discretizations in this case.
%for all $(p,u)\in\R^d\times \R$ with $\max(|p|,|u|)\le R$, the (generally non-symmetric) matrix $A(p,u)=\partial_p f(p,u)\in\R^{d\times d}$ satisfies
%\begin{equation}
%\label{ellipticity} 
%K_R^{-1}|\xi|^2
%\le 
%\xi^T A(p,u) \xi
%\le 
%K_R|\xi|^2 \qquad\forall\, \xi\in\R^d,
%\end{equation}
%where  $|\xi| $ is the Euclidean norm of $\xi$.
%We do not require uniform ellipticity:  we allow that $K_R\to\infty$ as $R\to\infty$.
\medskip

The problem \eqref{ivp} occurs in many applications, such as  the following where actually $f(p,u)=f(p)$ does not depend on $u$:
\begin{itemize}%[$\bullet$]\itemsep=0pt
%\begin{itemize}

%$F:\R^d\rightarrow \R$  
%$c(\nabla u,u)=1$ and $\nabla\cdotf(\nabla u,u)$ coincides with the 
%Frechet derivative of some energy functional 
%$I[u]=\int_\varOmega F(\nabla u)\d x$, 

%
%\item mean curvature flow in the graph settings \cite{DDE05}, where  
%$$
%c(p,u)=\sqrt{1+|p|^2} 
%\quad\mbox{and}\quad 
%f(p,u)=\frac{p}{\sqrt{1+|p|^2}} ,$$

\item minimal surface flow \cite{LT78,SS11} 
and the regularized models of total variation flow \cite{FOP05,FP03,LS14}, 
where 
$$
f(p)=\frac{p}{\sqrt{\lambda^2+|p|^2}}. 
$$

\item 
More generally, with $f(p)=\nabla_p F(p)$ for a smooth convex function $F:\R^d\rightarrow\R$,
\eqref{ivp} appears
as the $L^2(\varOmega)$ gradient flow, 
$$
(\partial_t u,v)_{L^2(\varOmega)}=-E'(u)v  \qquad\text{for all $v$ in a dense subspace of $H^1_0(\varOmega)$},
$$
for the energy functional
$E(u) = \int_\varOmega F(\nabla u) \d x$; see, e.g., \cite[Section 9.6.3]{Evans}.
\end{itemize}
The problem \eqref{ivp} also includes quasilinear equations, where $f(p,u)=A(u)p$ with a positive definite matrix $A(u)$, which may degenerate as $|u|\to\infty$.

Due to the strong nonlinearity of the equation, existing works on 
error estimates of the time discretization of \eqref{ivp} are very limited. 
Feng and Prohl \cite{FP03} have 
proved optimal-order convergence rate of the 
finite element solution of the regularized total variation flow  
with an implicit backward Euler scheme, 
under the time stepsize restriction $\tau=o(h^2)$, 
which was used to control the numerical solution in the $W^{1,\infty}$ norm 
via the inverse inequality. 
%%%%%%
Convergence of the numerical solution 
was proved in \cite{FOP05} without time-step size restriction, 
without explicit convergence rate. 
%%%%%%
%Ebmeyer and Vogelgesang \cite{EV08} have studied 
%the Perona--Malik model and proved the stability of the 
%finite element solution in $L^2(\varOmega)$ and $L^\infty(\varOmega)$, 
%respectively, with the implicit backward Euler scheme for the time 
%discretization. Since their proof relies on the discrete maximum principle,  
%they have assumed that the angle of the triangles in the triangulation 
%are all cute. Error estimates were not provided in \cite{EV08}. 
%%%%%%
By using the methodology of \cite{LS13}, 
Li and Sun presented optimal-order $L^2$-norm error estimates for 
the finite element solution of the minimal surface flow 
with  a linearized semi-implicit backward Euler scheme, without 
restriction on the time stepsize \cite{LS14}. Since their proof is based on the 
$L^2$-norm error estimate, they have assumed that the order of finite 
elements are greater than one in order to control the 
$W^{1,\infty}$ norm of the numerical solution via the inverse inequality. 
Overall, due to the strong nonlinearity of the equation, 
error estimates for the numerical solution of \eqref{ivp} need  
uniform boundedness of the numerical solution in $W^{1,\infty}$ 
(the bound should be independent of the mesh size), 
which is the main difficulty of this problem. 
Existing works on the problem 
are all restricted to backward Euler time discretization. 

In this paper we study semidiscretization in time by implicit Runge--Kutta methods such as the collocation methods based on the Radau nodes, which have excellent stability properties, allow for arbitrarily high order and can be implemented efficiently \cite[Chapter IV]{HW}. To emphasize the basic techniques and to keep the paper at a reasonable length, we do not include the effect of space discretization by finite elements in our stability and error analysis.
We note, however, that in considering only time discretization we cannot use inverse estimates, which are often convenient, but are restricted to quasi-uniform meshes and moreover lead to restrictions as indicated in the previous paragraph. It is thus of interest to develop techniques that do not rely on inverse estimates. Our results are new even for the case of the backward Euler time discretization.
This paper may provide a foundation for further analysis of fully discrete approximations of the problem.

In Section 2 we describe the temporal semidiscretization by implicit Runge--Kutta methods and present our main results, which are error bounds in the $W^{1,\infty}$ norm and, with a higher approximation order, in the energy norm. The proof of these results forms the remainder of the paper.

Section 3 presents a sequence of auxiliary results related to maximal $L^p$ regularity, which is the basic technique for obtaining our stability and error bounds. Discrete maximal $L^p$ regularity was first shown for the backward Euler method by Ashyralyev, Piskarev \& Weis \cite{APW} and for higher-order A-stable (and $A(\alpha)$-stable) multistep and Runge--Kutta time discretizations by Kov\'acs, Li \& Lubich \cite{KLL}. Discrete maximal $L^p$ regularity up to a factor logarithmic in the stepsize was given by Leykekhman \& Vexler \cite{LV} for discontinuous Galerkin time discretizations. The above-mentioned results relate to linear problems.
Discrete maximal $L^p$ regularity  was applied to the error analysis of time discretizations of reaction-diffusion equations in \cite{KLL}, of Ginzburg-Landau equations in \cite{Li-GL}, and of quasilinear parabolic equations in \cite{ALL}.

The proof of the error bound in the $W^{1,\infty}$ norm is given in Section 4, that of the improved error bound in the energy norm in Section 5.

\section{Runge--Kutta time discretization and statement of the main results}\label{Se:statement}

We consider the time discretization of \eqref{ivp} with constant stepsize $\tau>0$ (this could be relaxed to a fixed number of changes of the stepsize) by an implicit Runge-Kutta method with properties that are, in particular, satisfied by the $s$-stage Radau IIA method \cite[Section IV.5]{HW}, which is the collocation method at the Radau nodes (with right-most node $c_s=1$) and can also be viewed as a fully discretized discontinuous Galerkin method in time \cite{AMN}. We require the following properties (cf.~\cite[Section IV.3]{HW} for these notions):
\begin{align}
&\text{The Runge--Kutta method is A-stable, } \nonumber \\[-1mm]
&\text{it has an invertible coefficient matrix  $(a_{ij})_{i,j=1}^s$} 
\label{rk-cond}\\[-1mm]
&\text{and its weights satisfy $b_j=a_{s,j}$ ($j=1,\dots,s$). }
\nonumber
\end{align}
We let $t_n=n\tau$ for $n\ge 0$ (as long as $t_n$ does not exceed the final time $T$) and set $t_{n,i}=t_n +c_i\tau$, where $c_i=\sum_{j=1}^s a_{ij}$ are the nodes of the Runge--Kutta method, with $c_s=1$ so that $t_{n+1}=t_{n,s}$.

We denote by $u_{n,i}$ ($i=1,\dots,s$) the internal stages and by $u_{n}$ the solution approximation at the grid point $t_{n}$. The last condition in \eqref{rk-cond} ensures that
\begin{equation}\label{last-stage}
u_{n+1}=u_{n,s}.
\end{equation}
The time discretization of \eqref{ivp} is then determined by the equations 
\begin{equation}\label{RK}
u_{n,i}= u_{n}+\tau\sum_{j=1}^s
a_{ij} \nabla \cdot f(\nabla u_{n,j},u_{n,j}) \qquad (i=1,\dots,s)
\end{equation}
%\begin{subequations}\label{RK}
%\begin{align}
%%&\dot U_{ni}+=(f_{ni}-AU_{ni}),\\[10pt]
%&u_{n,i}= u_{n}+\tau\sum_{j=1}^s
%a_{ij} \dot u_{n,j} , \label{RK_a} \\
%&\dot u_{n,j}= 
%\nabla \cdot f(\nabla u_{n,j},u_{n,j}) 
%\label{RK_b}
%\end{align}  
%\end{subequations}
together with the  Dirichlet boundary conditions $u_{n,i}=0$ on $\partial\varOmega$. These equations are to be solved subsequently for $n=0,1,2,\dots$

\begin{remark}
Further finite element discretization of \eqref{RK} can be done 
in the following way: find $u_{n,i}^{h}$ in the finite element space $S_h$ such that 
$$
(u_{n,i}^{h},v_h)= (u_{n}^{h},v_h)-\tau\sum_{l=1}^s
a_{ij} \Bigl( f(\nabla u_{n,j}^{h},u_{n,j}^{h}) ,\nabla v_h\Bigr)
\qquad \forall\, v_h\in S_h ,
$$
and $u_{n+1}^h=u_{n,s}^h$. For the efficient implementation of the fully discrete Runge--Kutta equations, using systems of linear equations of just the dimension of $S_h$, we refer to \cite[Section IV.8]{HW}.
In this paper, we focus on the time discretization \eqref{RK}. 
\end{remark}

We recall the notion of {\it stage order}, cf.~\cite[p.\,226]{HW}: The Runge--Kutta method has stage order $k$ if for each $i=1,\dots,s$,
\begin{equation}\label{stage-order}
\sum_{j=1}^s a_{ij} c_j^{l-1} = \frac{c_i^l}l, \qquad\  l=1,\dots, k.
\end{equation}
In particular, the stage order of the $s$-stage Radau IIA method (as of any collocation method with polynomials of degree $s$) is $k=s$.

The stage order determines to what order the internal stages $u_{n,i}$ approximate the exact solution values $u(t_{n,i})$, and to what order the derivative approximations
\begin{equation}\label{RKDerivative}
\dot u_{n,j} :=  \nabla \cdot f(\nabla u_{n,j},u_{n,j}), \qquad j=1,\dots, s,
\end{equation}
approximate the exact solution derivatives $\partial_t u(t_{n,j})$, provided the solution is sufficiently regular in time.

To simplify the notation, we define the following 
vectors:
%\begin{subequations} 
\begin{align} \label{vec-not}
&\vec u_n:=(u_{n,i})_{i=1}^s, 
&\vec{\dot u}_n:=(\dot u_{n,i})_{i=1}^s, 
 \\
&u(\vec t_n):=(u(t_{n,i}))_{i=1}^s,
&\vec t_n:=(t_{n,i})_{i=1}^s  .
\end{align}  
%\end{subequations}
%For any given function 
%$f:\R^n\times \R\rightarrow \R$ and operator 
%$L: W^{2,q}(\varOmega)\rightarrow L^q(\varOmega)$ 
%(either linear or nonlinear), 
%we denote 
%\begin{subequations}\label{vecNotation}
%\begin{align} 
%&f(\nabla \vec u_n,\vec u_n):=(f(\nabla u_{n,i},u_{n,i}))_{i=1}^s,\\
%&L\vec u_n:= (Lu_{ni})_{i=1}^s , \\
%&f(\nabla \vec u_n,\vec u_n)L\vec u_n
%= (f(\nabla u_{n,i},u_{n,i})Lu_{n,i})_{i=1}^s .
%\end{align}  
%\end{subequations}
%Then the Runge--Kutta scheme \eqref{RK} 
%can be written in the vector form 
%\begin{subequations}\label{RKvec}
%\begin{align}
%%&\dot U_{ni}+=(f_{ni}-AU_{ni}),\\[10pt]
%&\vec u_{n}= u_{n-1,s}\vec 1_{s}+\tau R\vec{\dot u}_{n} , \label{RK_vec_a} \\
%&\vec{\dot u}_n= 
%c(\nabla \vec u_{n},\vec u_{n})
%\sum_{i,j=1}^d\partial_{p_j}f_i(\nabla \vec u_n,\vec u_n)
%\partial_{ij}\vec u_n .
%\label{RK_vec_b}
%\end{align}  
%\end{subequations}

We can now state our first main result, which in particular controls the 
$W^{1,\infty}(\varOmega)$-norm of the internal stages uniformly over the bounded time interval. 

\begin{theorem}\label{thm:main} Consider a Runge--Kutta method of stage order $k$ that satisfies \eqref{rk-cond}, such as the Radau IIA method with $s=k$ stages.
Assuming that the solution $u$ of \eqref{ivp} is sufficiently regular,  i.e., 
%\begin{subequations} 
\begin{align}\label{Regularity}
u\in C^{k+1}\big ([0,T];L^q(\varOmega)\big ) 
\cap C\big ([0,T];W^{2,q}(\varOmega)\big )  , \quad\text{for some $q>d$,}
\end{align}
%\end{subequations} 
there exists a positive constant $\tau_1$ such that for
$\tau<\tau_1$ the discrete problem \eqref{RK} admits a unique solution
that satisfies 
\begin{subequations}\label{err-est}
\begin{align}
\label{est1} 
\max_{0\leq n\leq N}
\left(
\|\vec u_{n}-u(\vec t_{n})\|_{L^\infty(\varOmega)^s}
+\|\nabla \vec u_{n}-\nabla u(\vec t_{n})\|_{L^\infty(\varOmega)^{ds}}
\right)
\le C\tau^k ,\\
\label{est2}
\bigg(\sum_{n=0}^N 
\tau \|\vec{\dot u}_{n}-\partial_t u(\vec t_{n})\|^p_{L^q(\varOmega)^s} 
+ 
\sum_{n=0}^N \tau \|\vec u_{n}- u(\vec t_{n})\|^p_{W^{2,q}(\varOmega)^s}\bigg)^{\frac1p}
\le C_{p,q}\tau^k ,\\
\quad\hbox{for all }\ 1<p<\infty . \nonumber 
\end{align}
\end{subequations}
The constants $C$ and $C_{p,q}$ are independent of $\tau$ and $N$ with $N\tau\le T$.
\end{theorem}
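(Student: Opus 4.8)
The plan is to linearize the scheme about the exact solution, write the resulting error recursion as a perturbed linear Runge--Kutta scheme for a non-autonomous elliptic operator, and then invoke the discrete maximal $L^p$ regularity of Section~3 to turn consistency and nonlinear-remainder bounds into the error estimates \eqref{est1}--\eqref{est2}. The whole argument is organized as a bootstrap in which the $W^{1,\infty}$-proximity of the stages to the exact solution is simultaneously a hypothesis -- needed to keep the linearization uniformly elliptic and to absorb the nonlinearity -- and a conclusion, extracted from \eqref{est2} by Sobolev embedding. Concretely, I would set $\vec e_n:=\vec u_n-u(\vec t_n)$ and subtract \eqref{RK} from the relation obtained by inserting the exact solution. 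Using \eqref{ivp} in the form $\partial_t u(t_{n,j})=\nabla\cdot f(\nabla u(t_{n,j}),u(t_{n,j}))$ and Taylor expanding $f$ about $(\nabla u(t_{n,j}),u(t_{n,j}))$, I obtain a linear recursion for $\vec e_n$ governed by the elliptic operator $A_{n,j}e=-\nabla\cdot\big(\partial_pf\,\nabla e+(\partial_uf)\,e\big)$ frozen along the exact solution, which is positive by \eqref{locally-elliptic}, driven by two sources: a consistency defect $\delta_{n,i}$ and a quadratic remainder $\nabla\cdot R_{n,j}$ with $R_{n,j}=O(|e_{n,j}|^2+|\nabla e_{n,j}|^2)$.

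For the consistency defect I would expand the exact solution in time: writing $u(t_{n,i})=u(t_n)+\tau\sum_j a_{ij}\,\partial_t u(t_{n,j})+\delta_{n,i}$, the stage-order conditions \eqref{stage-order} make this quadrature exact for temporal polynomials of degree $\le k-1$, so Taylor expansion with integral remainder gives $\delta_{n,i}=O(\tau^{k+1})$ pointwise, with constants controlled by the $C^{k+1}([0,T];L^q)$ regularity in \eqref{Regularity}. Because this defect perturbs the stage values rather than the stage derivatives, it enters the discrete maximal-regularity estimate after a time-difference that costs one power of $\tau$, producing the stage-order exponent $\tau^k$ in \eqref{est2} -- the order reduction characteristic of stiff parabolic problems.

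Next I would apply the discrete maximal $L^p$ regularity of Section~3 (which rests on the A-stability and invertibility assumed in \eqref{rk-cond}) to this linear recursion. Since $A_{n,j}$ is non-autonomous, I would freeze its coefficients at one time level and treat the variation as a lower-order perturbation, controlled by the continuity $u\in C([0,T];W^{2,q})$ together with the embedding $W^{2,q}\hookrightarrow W^{1,\infty}$ valid for $q>d$; the non-uniform ellipticity allowed in \eqref{locally-elliptic} is harmless because along the regular solution and a small $W^{1,\infty}$-neighborhood of it the arguments of $f$ range over a compact set on which the symmetric part of $\partial_pf$ is uniformly positive. Maximal regularity then yields $\|\vec{\dot e}\|_{\ell^p(L^q)}+\|\vec e\|_{\ell^p(W^{2,q})}\le C\big(\|\delta\|_{\ell^p(L^q)}+\|\nabla\cdot R\|_{\ell^p(L^q)}\big)$, where the defect term is $O(\tau^k)$. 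The quadratic remainder is estimated by a product rule, $\|\nabla\cdot R_{n,j}\|_{L^q}\le C\,\|\vec e\|_{W^{1,\infty}}\,\|\vec e\|_{W^{2,q}}$, so once $\|\vec e\|_{W^{1,\infty}}$ is small this term is absorbed into the left-hand side, leaving \eqref{est2}. The pointwise-in-time bound \eqref{est1} then follows from \eqref{est2} by the discrete analogue of the embedding $W^{1,p}(0,T;L^q)\cap L^p(0,T;W^{2,q})\hookrightarrow C([0,T];W^{1,\infty})$, which holds for $q>d$ once $p$ is chosen large enough, again using $W^{2,q}\hookrightarrow W^{1,\infty}$.

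The main obstacle is closing the circular dependence just described: the maximal-regularity step presupposes $W^{1,\infty}$-smallness of $\vec e$, while \eqref{est1} is exactly what delivers it. I would resolve this by a fixed-point/continuation argument on a ball $\{\|\vec e\|_{\ell^p(W^{2,q})}\le\rho\}$. The bounded invertibility of the linearized Runge--Kutta operator furnished by maximal regularity gives, via a contraction mapping, both existence and uniqueness of the discrete solution for $\tau<\tau_1$; for $\tau$ sufficiently small the resulting $\tau^k$ bound keeps the iterate inside the ball and, through the embedding, keeps $\|\vec e\|_{W^{1,\infty}}$ below the threshold needed for the absorption, so that \eqref{est1}--\eqref{est2} close self-consistently. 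The delicate technical points are making the freezing/perturbation argument for $A_{n,j}$ uniform in $n$ and $\tau$, and choosing $p$ large while keeping the constant $C_{p,q}$ finite.
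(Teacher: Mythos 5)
Your proposal is correct in its analytic core and follows essentially the same route as the paper: the error equation is obtained by linearizing along the exact solution, the stage defects are converted into perturbations of the stage derivatives by solving $\tau\sum_j a_{ij}r_{n,j}=-d_{n,i}$ (which costs exactly one power of $\tau$ and produces the stage-order rate $\tau^k$, as in \eqref{r-bound}), discrete maximal $\ell^p$ regularity for the nonautonomous frozen-coefficient problem (Lemma~\ref{lem:rk-maxreg-t}) absorbs the remainder under a $W^{1,\infty}$ smallness hypothesis, and \eqref{est1} is recovered from \eqref{est2} through the discrete embedding \eqref{DImbedding} with $2/p+d/q<1$ --- all exactly as in Lemma~\ref{lem:error-bound}. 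Where you genuinely diverge is in how the circularity is broken: the paper does \emph{not} use a contraction. It defines a solution map with a cutoff $\beta(\varphi)$ that caps the $W^{1,\infty}$ size of the frozen gradient at $\sqrt\rho$, shows the map is continuous and \emph{compact} (compactness coming from Lemma~\ref{lem:sobolev}), and applies Schaefer's fixed point theorem (Lemma~\ref{THMSchaefer}); the a priori bound shows the cutoff is inactive at the fixed point, and uniqueness is then obtained separately from the stability bound \eqref{e-est} with $\vec r_n=0$. Your Banach contraction is a legitimate alternative that delivers existence and uniqueness in one stroke, but it requires Lipschitz estimates for the nonlinear remainder in the maximal-regularity norm (routine for smooth $f$, yet precisely what Schaefer lets the paper avoid --- only boundedness of the solution set is needed there). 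Two details in your version need repair. First, your invariant ball must be taken in the full maximal-regularity norm (difference quotients in $\ell^p(L^q)$ \emph{plus} $\ell^p(W^{2,q})$): a ball in $\ell^p(W^{2,q})$ alone does not control $\max_n\|\vec e_n\|_{W^{1,\infty}(\varOmega)^s}$ via \eqref{DImbedding}, so the smallness needed for absorption and for the contraction constant would not follow. Second, your divergence-form linearization $\nabla\cdot\bigl(\partial_pf\,\nabla e+(\partial_uf)\,e\bigr)$ is not covered by the paper's maximal-regularity lemmas, which are stated for pure second-order non-divergence operators with H\"older coefficients; rewriting it in non-divergence form produces first-order coefficients that involve $\nabla^2u$ and hence lie only in $L^q$, so these must be treated as perturbations absorbed by interpolation and a discrete Gronwall argument. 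The paper sidesteps this by keeping the frozen operator $\sum_{k,l}f_{k,l}(\nabla u(t_{n,j}))\partial_k\partial_l$ and pushing even the \emph{linear}-in-$e$ part of the remainder into the inhomogeneity \eqref{g-bound}, which it controls with the Ehrling-type inequality $\|\vec e_n\|_{W^{1,\infty}}\le\mu\|\vec e_n\|_{W^{2,q}}+C_\mu\|\vec e_n\|_{L^q}$ followed by the triangle-inequality trick \eqref{triangle-inequality} and a discrete Gronwall inequality --- so your ``genuinely quadratic'' remainder buys nothing that the paper's lower-order Gronwall step does not already provide.
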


The proof of Theorem \ref{thm:main} is based on discrete maximal parabolic regularity and will be presented in Section~\ref{sec:proof-main-thm}. 
For simplicity, we carry out the proof for the special case 
$f(\nabla u,u)=f(\nabla u)$. % and $c(\nabla u,u)=c(\nabla u)$. 
The proof for the general case is similar but contains additional 
lower order terms, which do not pose substantial difficulties in the analysis but clutter the formulas. 

Using Theorem~\ref{thm:main} together with energy estimates, the order of approximation can be improved to $k+1$ in the energy norm provided that the Runge-Kutta method satisfies the following two extra conditions: 

- The method is {\it algebraically stable}, that is, 
\begin{equation}
\begin{aligned}\label{alg-stab}
&\qquad\text{the weights $b_i$ are all positive and} \\[-1mm]
&\qquad\text{the $s\times s$ matrix with entries $b_ia_{ij}+b_ja_{ji}-b_ib_j$ is positive semidefinite.}
\end{aligned}
\end{equation}

- The quadrature formula with weights $b_i$ and nodes $c_i$  has at least order $k+1$:
\begin{equation} \label{order}
\sum_{i=1}^s b_i c_i^{l-1} = \frac 1l, \qquad\quad l=1,\dots,k+1.
\end{equation}
This is satisfied for the Radau IIA methods with $s\ge 2$ stages, for which the equations in \eqref{order} hold for $l\le 2s-1$ and which are algebraically stable; see  \cite[Section IV.12]{HW}.
We will prove the following result in Section~\ref{sec:energy}.

\begin{theorem}\label{thm:main-2} Consider a Runge--Kutta method of stage order $k$ that satisfies \eqref{rk-cond}, \eqref{alg-stab} and \eqref{order}, such as the Radau IIA method with $s=k\ge 2$ stages.
Assuming that the solution $u$ of \eqref{ivp} is sufficiently regular,  i.e., satisfies \eqref{Regularity} and
%\begin{subequations} 
\begin{align}\label{Regularity+}
u\in H^{k+1}\big (0,T;H^1_0(\varOmega)\big ) 
\cap H^{k+2}\big (0,T;H^{-1}(\varOmega)\big )  ,
\end{align}
%\end{subequations} 
there exists a positive constant $\tau_2$ such that for
$\tau<\tau_2$ the discrete problem \eqref{RK} admits a unique solution 
that satisfies 
%
%\begin{subequations}
\begin{align}
\label{est-en} 
\max_{1\leq n\leq N}
\| u_{n}-u( t_{n})\|_{L^2(\varOmega)}
+
\bigg(\sum_{n=0}^N 
\tau \|\nabla\vec{u}_{n}- \nabla u(\vec t_{n})\|^2_{L^2(\varOmega)^{ds}} 
\bigg)^{\frac12}
\le C_2\tau^{k+1} .
\end{align}
%\end{subequations}
The constant $C_2$ is independent of $\tau$ and $N$ with $N\tau\le T$.
\end{theorem}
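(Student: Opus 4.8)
The plan is to combine the $W^{1,\infty}$ bounds from Theorem~\ref{thm:main} with an energy argument that exploits algebraic stability \eqref{alg-stab} and the extra quadrature order \eqref{order}. First I would invoke Theorem~\ref{thm:main}: for $\tau$ small the stages $u_{n,i}$ exist, are unique, and satisfy $\|\nabla u_{n,i}-\nabla u(t_{n,i})\|_{L^\infty(\varOmega)}\le C\tau^k$ uniformly in $n,i$; together with $u\in C([0,T];W^{2,q})\hookrightarrow C([0,T];W^{1,\infty})$ (as $q>d$) this keeps $\nabla u_{n,i}$ in a fixed bounded set. Writing $e_{n,i}=u_{n,i}-u(t_{n,i})$, $e_n=u_n-u(t_n)$ and $w_{n,j}=\dot u_{n,j}-\partial_t u(t_{n,j})=\nabla\cdot[f(\nabla u_{n,j})-f(\nabla u(t_{n,j}))]$, the exact solution inserted into \eqref{RK} produces stage defects $\delta_{n,i}$ and an update defect $\delta_{n+1}$ through $e_{n,i}=e_n+\tau\sum_j a_{ij}w_{n,j}-\delta_{n,i}$ and $e_{n+1}=e_n+\tau\sum_j b_j w_{n,j}-\delta_{n+1}$.

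The sizes of the defects are where the order conditions enter. A Peano-kernel representation together with the stage-order relations \eqref{stage-order} gives $\|\delta_{n,i}\|_{H^1_0}\le C\tau^{k+1/2}\big(\int_{t_n}^{t_{n+1}}\|\partial_t^{k+1}u\|_{H^1_0}^2\,\d t\big)^{1/2}$, so $\tau\sum_n\|\nabla\delta_{n,i}\|_{L^2}^2=O(\tau^{2k+2})$ by the regularity $u\in H^{k+1}(0,T;H^1_0)$ in \eqref{Regularity+}. The higher quadrature order \eqref{order} improves the update defect: measured in the dual norm it satisfies $\|\delta_{n+1}\|_{H^{-1}}\le C\tau^{k+3/2}\big(\int_{t_n}^{t_{n+1}}\|\partial_t^{k+2}u\|_{H^{-1}}^2\,\d t\big)^{1/2}$, hence $\tau^{-1}\sum_n\|\delta_{n+1}\|_{H^{-1}}^2=O(\tau^{2k+2})$ by $u\in H^{k+2}(0,T;H^{-1})$. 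This gain of one power in the update defect is exactly what turns stage order $k$ into energy order $k+1$.

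Next I would run the algebraic-stability energy identity. Setting $E_{n,i}=e_{n,i}+\delta_{n,i}=e_n+\tau\sum_j a_{ij}w_{n,j}$ and $\tilde e_{n+1}=e_{n+1}+\delta_{n+1}=e_n+\tau\sum_j b_j w_{n,j}$, the standard computation yields
\begin{multline*}
\|\tilde e_{n+1}\|_{L^2}^2
=\|e_n\|_{L^2}^2
+2\tau\sum_{i=1}^s b_i\langle E_{n,i},w_{n,i}\rangle \\
-\tau^2\sum_{i,j=1}^s m_{ij}\langle w_{n,i},w_{n,j}\rangle,
\end{multline*}
where $m_{ij}=b_ia_{ij}+b_ja_{ji}-b_ib_j$; by \eqref{alg-stab} this last sum is nonnegative and can be dropped. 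The crucial coercivity comes from integration by parts (the errors vanish on $\partial\varOmega$) and linearization: $\langle e_{n,i},w_{n,i}\rangle=-\langle\nabla e_{n,i},B_{n,i}\nabla e_{n,i}\rangle$ with $B_{n,i}=\int_0^1\partial_p f(\nabla u(t_{n,i})+\theta\nabla e_{n,i})\,\d\theta$; by \eqref{locally-elliptic} and the $L^\infty$ proximity its symmetric part is $\ge\alpha>0$ for $\tau$ small, so $\langle e_{n,i},w_{n,i}\rangle\le-\alpha\|\nabla e_{n,i}\|_{L^2}^2$. The stage-defect cross terms $\langle\delta_{n,i},w_{n,i}\rangle=-\langle\nabla\delta_{n,i},f(\nabla u_{n,i})-f(\nabla u(t_{n,i}))\rangle$ are bounded by $C\|\nabla\delta_{n,i}\|\,\|\nabla e_{n,i}\|$ and absorbed by Young's inequality into the coercive term. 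The update defect is handled by expanding $\|e_{n+1}\|^2=\|\tilde e_{n+1}\|^2-2\langle e_{n+1},\delta_{n+1}\rangle-\|\delta_{n+1}\|^2$ and using $e_{n+1}=e_{n,s}$ (from $c_s=1$) to bound $|\langle e_{n+1},\delta_{n+1}\rangle|\le\|\nabla e_{n,s}\|_{L^2}\|\delta_{n+1}\|_{H^{-1}}$; here $b_s>0$ from \eqref{alg-stab} lets me absorb a fraction of $\tau\|\nabla e_{n,s}\|^2$ into the coercive sum at the cost of $\tau^{-1}\|\delta_{n+1}\|_{H^{-1}}^2$, which sums to $O(\tau^{2k+2})$.

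Summing over $n$ and telescoping, with $e_0=0$, then gives $\|e_N\|_{L^2}^2+\alpha\tau\sum_{n,i}b_i\|\nabla e_{n,i}\|_{L^2}^2\le C\tau^{2k+2}$, which is the claimed bound \eqref{est-en} (in the simplified case $f=f(\nabla u)$; the general case adds lower-order terms in $\|e_{n,i}\|_{L^2}$ that are removed by a discrete Gronwall inequality). The hard part will be the treatment of the update defect: obtaining order $k+1$ rather than $k$ requires measuring $\delta_{n+1}$ in $H^{-1}$, pairing it with the $H^1_0$-controlled endpoint error $e_{n,s}$, and balancing the powers of $\tau$ so that the pairing is absorbed by the coercive dissipation. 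This is precisely where algebraic stability (which supplies both $m_{ij}\ge0$ as a matrix and $b_s>0$) and the extra quadrature order \eqref{order} must work in tandem.
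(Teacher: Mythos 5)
Your proposal is correct, and its skeleton coincides with the paper's proof in Section~\ref{sec:energy}: the same defect decomposition with Peano-kernel representations (stage defects measured in $H^1_0(\varOmega)$ via \eqref{stage-order}, the update defect gaining one power of $\tau$ from \eqref{order} and measured, after division by $\tau$, in $H^{-1}(\varOmega)$, exactly as in \eqref{delta}); the same algebraic-stability expansion in which the quadratic form with matrix $b_ia_{ij}+b_ja_{ji}-b_ib_j$ is discarded by \eqref{alg-stab}; the same $H^{-1}$--$H^1_0$ pairing of the update defect with an endpoint stage error absorbed into the dissipation via $c_s=1$ and $b_s>0$ (the paper pairs $d_{n+1}/\tau$ with $e_n=e_{n-1,s}$ from the previous step, you pair $\delta_{n+1}$ with $e_{n+1}=e_{n,s}$ from the current one --- equivalent bookkeeping); and the same appeal to Theorem~\ref{thm:main} to pin $\nabla u_{n,i}$ to a fixed compact set. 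Where you genuinely deviate is the coercivity step. The paper rewrites the error equation in non-divergence form \eqref{err-a}, integrates by parts, and must then control a term $I_2$ containing $\partial_k\bigl(f_{k,l}(\nabla u^*)\bigr)$ as well as the second nonlinear term in \eqref{eed}; this costs a H\"older estimate with $\frac12+\frac1q+\frac1r=1$, the interpolation inequality $\|v\|_{L^r}\le\eps\|\nabla v\|_{L^2}+C_\eps\|v\|_{L^2}$, the $W^{2,q}$ regularity of $u^*$, the $W^{2,q}$ control of the numerical solution supplied by \eqref{est2}, and the auxiliary bound $\|\dot e_{n,i}\|_\star\le C\|e_{n,i}\|$ together with the $H^1_0$ norm of $d_{n+1}$ in \eqref{delta}. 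You instead keep the divergence form and linearize with the mean-value matrix $B_{n,i}=\int_0^1\partial_pf\bigl(\nabla u(t_{n,i})+\theta\nabla e_{n,i}\bigr)\,\d\theta$, so that $\langle e_{n,i},\dot e_{n,i}\rangle=-\langle\nabla e_{n,i},B_{n,i}\nabla e_{n,i}\rangle\le-\alpha\|\nabla e_{n,i}\|_{L^2}^2$ follows directly from \eqref{locally-elliptic} and the $W^{1,\infty}$ proximity, while the stage-defect cross terms reduce, after one further integration by parts (legitimate, since $d_{n,i}\in H^1_0(\varOmega)$ under \eqref{Regularity+}), to products $\|\nabla d_{n,i}\|_{L^2}\,\|\nabla e_{n,i}\|_{L^2}$ absorbed by Young's inequality. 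This monotonicity-type route is more elementary: it needs only the $W^{1,\infty}$ part of Theorem~\ref{thm:main}, dispenses with the interpolation step and the dual-norm bound on $\dot e_{n,i}$, and is the natural argument for the gradient-flow structure $f=f(\nabla u)$. What the paper's non-divergence formulation buys is exact parallelism with the linearized operators $A(t)$ used in Sections~\ref{sec:maxreg} and~\ref{sec:proof-main-thm}; both versions extend to the general flux $f(\nabla u,u)$ with lower-order terms and a discrete Gronwall argument, as you note. Your order counting ($\tau\sum_n\|\nabla d_{n,i}\|_{L^2}^2=O(\tau^{2k+2})$ and $\tau^{-1}\sum_n\|\delta_{n+1}\|_{H^{-1}}^2=O(\tau^{2k+2})$) and the final telescoping with $e_0=0$ match the paper's concluding summation.
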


\section{Auxiliary results related to maximal $L^p$ regularity}
\label{sec:maxreg}

The key to the error bounds of Theorems 2.1 and 2.2 is to control the $W^{1,\infty}(\varOmega)$ norm of the numerical solution. In this paper, this is done using the space-time Sobolev inequality, for $2/p+d/q < 1$,
\begin{equation} \label{sob-ineq}
%\begin{aligned}
\|v\|_{L^\infty(0,T;W^{1,\infty}(\varOmega))}
\le c_{p,q}
(\|\partial_tv\|_{L^p(0,T;L^q(\varOmega))}
+\|v\|_{L^p(0,T;W^{2,q}(\varOmega))})
%\end{aligned}
\end{equation} 
together with the observation that the norm on the right-hand side is what is controlled by
{\it maximal $L^p$ regularity} for the solution of a linear parabolic problem with a second-order elliptic differential operator. Maximal $L^p$ regularity is characterized by Weis \cite{Weis01} in terms of the {\it $R$-boundedness} of the resolvent on a sector, a property that also yields {\it discrete maximal $\ell^p$-regularity} for the Runge--Kutta time discretization uniformly in the stepsize \cite{KLL}. In this section we present some results from this range of ideas and techniques. These results follow by suitably combining various results scattered in the literature. They will be important in the proof of Theorem~\ref{thm:main} and are also of independent interest.

\subsection{A Sobolev embedding}

\begin{lemma}\label{lem:sobolev}
If $2/p+d/q<1$, then there is the compact embedding
$$
W^{1,p}(0,T;L^q(\varOmega))\cap L^p(0,T;W^{2,q}(\varOmega)) \hookrightarrow 
C([0,T];W^{1,\infty}(\varOmega)).
$$
\end{lemma}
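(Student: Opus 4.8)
The plan is to prove this as a two-step embedding: first an embedding in time into a Hölder-continuous class with values in a suitable interpolation space, and then a Sobolev embedding in space from that interpolation space into $W^{1,\infty}(\varOmega)$. The condition $2/p+d/q<1$ should split as the sum of a temporal regularity gain and a spatial one, and the strict inequality is precisely what buys us both continuity in time and the compactness of the embedding.

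\medskip

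First I would set $X_0 = L^q(\varOmega)$ and $X_1 = W^{2,q}(\varOmega)$ and regard the intersection space $W^{1,p}(0,T;X_0)\cap L^p(0,T;X_1)$ as the natural domain for maximal $L^p$-regularity. A standard trace/interpolation result (of Lions--Peetre type, see e.g. the characterization of the trace space of such maximal regularity spaces) gives a continuous embedding
\begin{equation*}
W^{1,p}(0,T;X_0)\cap L^p(0,T;X_1)\hookrightarrow C([0,T];(X_0,X_1)_{1-1/p,\,p}),
\end{equation*}
where $(X_0,X_1)_{\theta,p}$ is the real interpolation space. Here the endpoint continuity (rather than mere boundedness) uses $p>1$. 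The real interpolation space is identified as a Besov space,
\begin{equation*}
(L^q(\varOmega),W^{2,q}(\varOmega))_{1-1/p,\,p}=B^{2-2/p}_{q,p}(\varOmega),
\end{equation*}
so the time variable has delivered values in $B^{2-2/p}_{q,p}(\varOmega)$.

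\medskip

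Next I would apply the spatial Sobolev embedding $B^{s}_{q,p}(\varOmega)\hookrightarrow W^{1,\infty}(\varOmega)$ with $s=2-2/p$. This holds as soon as $s-d/q>1$, i.e. $2-2/p-d/q>1$, which is exactly the hypothesis $2/p+d/q<1$. Since the inequality is strict, there is room to spare: one can interpolate with a slightly smaller smoothness index, and combined with the Aubin--Lions--Simon lemma (the temporal compactness coming from the genuine gain of one time derivative together with $X_1\hookrightarrow\hookrightarrow X_0$ compactly on the bounded domain $\varOmega$) this upgrades the continuous embedding to a \emph{compact} one into $C([0,T];W^{1,\infty}(\varOmega))$. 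The main obstacle, and the step requiring the most care, is the correct identification of the trace interpolation space and verifying that the strict inequality gives the slack needed simultaneously for (i) continuity in time at the endpoint exponent, (ii) the spatial Sobolev embedding into $W^{1,\infty}$ rather than just a Hölder or Lebesgue space, and (iii) compactness; each of these consumes a sliver of the gap $1-(2/p+d/q)>0$, and one must check they can be satisfied together. I would cite the relevant trace and embedding theorems rather than reprove them, since the content here is the assembly of known results rather than a new estimate.
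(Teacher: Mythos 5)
Your proposal is correct and follows essentially the same route as the paper's proof: the trace/interpolation embedding into $(L^q(\varOmega),W^{2,q}(\varOmega))_{1-1/p,p}=B^{2-2/p;q,p}(\varOmega)$, the Besov-to-$W^{1,\infty}$ spatial embedding from the strict inequality $2/p+d/q<1$ (with an $\epsilon$ of slack), and compactness via the Aubin--Lions--Simon lemma. The one parenthetical to sharpen: the compact embedding that Aubin--Lions--Simon actually requires here is $B^{2-2/p;q,p}(\varOmega)\hookrightarrow\hookrightarrow W^{1,\infty}(\varOmega)$ --- which the paper obtains by Lions--Peetre interpolation of compactness, writing $B^{2-2/p;q,p}(\varOmega)=\bigl(B^{2-2/p-\epsilon;q,p}(\varOmega),W^{2,q}(\varOmega)\bigr)_{\theta,p}$ with the compact leg $W^{2,q}(\varOmega)\hookrightarrow\hookrightarrow W^{1,\infty}(\varOmega)$ (valid since $q>d$) --- and not $W^{2,q}(\varOmega)\hookrightarrow\hookrightarrow L^q(\varOmega)$ as you wrote, though your remark about interpolating with a slightly smaller smoothness index is exactly this mechanism.
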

\noindent
This implies the bound \eqref{sob-ineq} for all $v\in W^{1,p}(0,T;L^q(\varOmega))\cap L^p(0,T;W^{2,q}(\varOmega))$.

\begin{proof}$\quad$
Via Sobolev embedding, we have
\begin{equation}\label{Imbedding}
\begin{aligned}
&W^{1,p}(0,T;L^q(\varOmega))\cap L^p(0,T;W^{2,q}(\varOmega))  \\
&\hookrightarrow 
L^\infty(0,T;(L^q(\varOmega),W^{2,q}(\varOmega))_{1-1/p,p})
\qquad\quad\,\, \text{see \cite[Proposition 1.2.10]{Lunardi95}}\\
&=
L^\infty(0,T;B^{2-2/p;q,p}(\varOmega)) 
\quad\,\,\, \text{by the definition of Besov spaces 
\cite[\textsection 7.32]{Adams}}. 
%\\
%&\hookrightarrow 
%L^\infty(0,T;W^{1,\infty}(\R^d)), 
%\qquad \text{when $2/p+d/q<1$, see \cite[\textsection 7.34]{Adams}} .
\end{aligned}
\end{equation}
Hence, 
$W^{1,p}(0,T;L^q(\varOmega))\cap L^p(0,T;W^{2,q}(\varOmega))$ 
is continuously embedded into the following space:
\begin{equation}\label{Imbedding-X}
X:=\{u\in L^\infty(0,T;B^{2-2/p;q,p}(\varOmega)) :\,
\partial_tu\in L^p(0,T;L^q(\varOmega))\} .
\end{equation}
If $2/p+d/q<1$, then there exists a small $\epsilon>0$ such that 
$2/p+\epsilon+d/q<1$, and so \cite[Theorem 7.34]{Adams} 
implies that $B^{2-2/p-\epsilon;q,p}(\varOmega)$ is continuously embedded into 
$W^{1,\infty}(\varOmega)$. 
Since $W^{2,q}(\varOmega)$ is compactly embedded into 
$W^{1,\infty}(\varOmega)$ (cf. \cite[Theorem 6.3]{Adams}) 
and 
$$B^{2-2/p;q,p}(\varOmega)=
(B^{2-2/p-\epsilon;q,p}(\varOmega),W^{2;q}(\varOmega))_{\theta,p},
\quad\mbox{with}
\,\,\,
\theta=\frac{\epsilon}{2/p+\epsilon} ,
$$
the Lions--Peetre theorem (\cite[Chapter V, Theorem 2.2]{LP}, see also \cite{Cobos}) implies that $B^{2-2/p;q,p}(\varOmega)$ is also compactly embedded into  
$W^{1,\infty}(\varOmega)$. 

Since $B^{2-2/p;q,p}(\varOmega)$ is compactly embedded into 
$W^{1,\infty}(\varOmega)$ and $W^{1,\infty}(\varOmega)$
is continuously embedded into $L^q(\varOmega)$, 
the Aubin--Lions--Simon lemma \cite[Theorem II.5.16]{BF13} 
implies that  $X$ is compactly embedded into 
$C([0,T];W^{1,\infty}(\varOmega))$. 
\QED\end{proof}

\subsection{An $R$-boundedness result}

We begin by recalling the notion of $R$-boundedness on $L^q$-spaces; see \cite{KW}.
A collection $\mathcal{T}$ of operators on $L^q(\varOmega)$
is {\it $R$-bounded} if and only if there is a constant $C_R$, called an $R$-bound of $\mathcal{T}$, such that
any finite subcollection of operators
$T_1,\dots,T_l\in \mathcal{T}$
satisfies
\begin{align*}
\bigg\|\bigg(\sum_{j=1}^l
|T_jv_j|^2\bigg)^{\frac{1}{2}}\bigg\|_{L^q(\varOmega)}
\! \! \! \leq C_R\bigg\|\bigg(\sum_{j=1}^l
|v_j|^2\bigg)^{\frac{1}{2}}\bigg\|_{L^q(\varOmega)} ,
\quad
\forall\,\, v_1,v_2,...,v_l\in L^q(\varOmega) .
\end{align*}
We will need the following result.

\begin{lemma}\label{lem:R-bound}
Let the elliptic operator 
$A :W^{2,q}(\varOmega)\cap W^{1,q}_0(\varOmega)\rightarrow 
L^q(\varOmega)$ with $1< q<\infty$ be defined by 
\begin{equation}
A\varphi = \sum_{i,j=1}^d\alpha_{ij}\partial_{i}\partial_j\varphi  ,
\end{equation} 
where the coefficient functions $\alpha_{ij}:\varOmega\to\R$ $(i,j=1,\dots,d)$ (which can be assumed symmetric: $\alpha_{ij}=\alpha_{ji}$) satisfy 
the following assumptions for some positive constants $\mu$ and $K$ and $\kappa$: 

(A1)  The coefficients are bounded in a H\"older norm:
$$
\|\alpha_{ij}\|_{C^\mu(\overline\varOmega)}\le K \, ;
$$

(A2) The symmetric coefficient matrix $(\alpha_{ij})$ 
satisfies the uniform ellipticity condition 
\begin{align} 
 \sum_{i,j=1}^d\alpha_{ij}(x)\xi_i \xi_j \ge
\kappa \sum_{j=1}^d \xi_j^2
\qquad
\forall\, x\in\varOmega,\,\,\forall\, \xi=(\xi_j)\in\C^d  \, .
\end{align}
%
%\begin{align} 
%K_0^{-1} \sum_{j=1}^d|\xi_j|^2\leq 
%\sum_{i,j=1}^d\alpha_{ij}(x)\xi_i\xi_j
%\leq K_0\sum_{j=1}^d|\xi_j|^2,\quad
%\forall\, x\in\varOmega,\,\,\forall\, \xi_j\in\R  \, ; 
%\end{align}
%
%(A3) $K_0^{-1}\leq c(x)\leq K_0$. 
Then, the collection of operators $\{z(z-A)^{-1}\,:\, |\arg z|<\theta\}$ 
is $R$-bounded on $L^q(\varOmega)$ for some $\theta\in(\pi/2,\pi)$. 
Both the $R$-bound and the angle $\theta$ depends only on $\mu$, $K$, $\kappa$, $\varOmega$ and $q$. 
\end{lemma}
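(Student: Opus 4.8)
The plan is to obtain the $R$-boundedness of the sectorial resolvent family by the classical \emph{localization and coefficient-freezing} method, which reduces the variable-coefficient operator $A$ to constant-coefficient model problems on $\R^d$ and on a half-space with Dirichlet condition, for which $R$-boundedness is read off from the operator-valued Fourier multiplier theorem of Weis \cite{Weis01} (see also \cite{KW}).

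First I would settle the constant-coefficient whole-space model. If $A_0=\sum_{i,j}\alpha_{ij}^0\,\partial_i\partial_j$ has a constant symmetric positive definite coefficient matrix satisfying (A2), then $A_0$ is the Fourier multiplier with symbol $-a(\xi)$, where $a(\xi)=\sum_{i,j}\alpha_{ij}^0\xi_i\xi_j$ is real with $a(\xi)\ge\kappa|\xi|^2$. Hence the spectrum of $A_0$ lies on $(-\infty,0]$, and for $z$ in a sector $\{|\arg z|<\theta\}$ the symbol $m_z(\xi)=z/(z+a(\xi))$ is well defined and satisfies Mikhlin-type estimates $|\xi|^{|\beta|}\,|\partial_\xi^\beta m_z(\xi)|\le C$ uniformly in $z$, with $C$ depending only on $\kappa$, the bound $K$ for the entries, $d$ and $\theta$. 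The operator-valued Mikhlin multiplier theorem then yields that $\{z(z-A_0)^{-1}\}$ is $R$-bounded on $L^q(\R^d)$; after flattening the boundary near a boundary point, the corresponding constant-coefficient Dirichlet half-space model is treated by the analogous half-space multiplier analysis.

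Next I would freeze the coefficients. Covering $\overline{\varOmega}$ by finitely many balls $B_r(x_\ell)$ of a fixed radius $r$, with a subordinate partition of unity, I compare on each ball the localized operator with the constant-coefficient operator $A_{x_\ell}$ obtained by evaluating $\alpha_{ij}$ at $x_\ell$. The Hölder bound (A1) gives $\|\alpha_{ij}-\alpha_{ij}(x_\ell)\|_{L^\infty(B_r)}\le K r^\mu$, so the leading part of the difference is small, while the commutators of the cut-offs with $A$ are of lower order. Using the smallness $Kr^\mu$ together with the decay of the model resolvent for large $|z|$, the perturbation can be made $R$-small, and the permanence properties of $R$-boundedness (stability under sums and products and under small perturbations, via Kahane's contraction principle) allow one to assemble the global resolvent and absorb the remainder by a Neumann-type argument. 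Choosing $r$ and $\theta\in(\pi/2,\pi)$ uniformly and keeping every constant dependent only on $\mu,K,\kappa,\varOmega,q$ gives the claim.

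The main obstacle is this freezing step: one must control the lower-order commutator terms in the $R$-bounded sense and show that, on the sector and for $|z|$ large, they are $R$-small, while for $z$ in a bounded part of the sector one invokes the sectoriality of $A$ (so that $\|z(z-A)^{-1}\|$ is already bounded there and an operator-norm perturbation suffices). Ensuring that the radius $r$, the angle $\theta$, and all resulting $R$-bounds are uniform over the freezing points $x_\ell\in\overline{\varOmega}$ is precisely where the uniform ellipticity (A2) and the Hölder continuity (A1) enter essentially.
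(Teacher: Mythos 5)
Your route is genuinely different from the paper's, and it also has one concrete soft spot. The paper does \emph{not} redo the localization/coefficient-freezing analysis: for each fixed coefficient matrix it simply cites the known $R$-sectoriality of such elliptic operators (\cite{PS}, \cite[Theorem 1.1]{K}, \cite[7.18]{KW}), proves invertibility of $A$ via positivity of the semigroup and \cite[Theorem 9.15]{GT}, and then obtains the \emph{uniformity} of the angle and the $R$-bound over the whole class of coefficients by a soft compactness argument: the set $M$ of matrices satisfying (A1)--(A2) is compact in the sup-norm by Arzel\`a--Ascoli, the perturbation theorem for $R$-sectorial operators \cite[Theorem 6.5]{KW} gives each operator a sup-norm neighbourhood in $M$ with doubled $R$-bound, and a finite subcover yields a uniform $\theta$ and $R$-bound. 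Your plan instead re-derives the $R$-sectoriality itself, quantitatively, from the whole-space and half-space constant-coefficient models plus freezing. What each approach buys: yours gives in-principle explicit constants depending only on $(\mu,K,\kappa,\varOmega,q)$ and is self-contained, at the cost of the full localization machinery (including the Dirichlet half-space model, which is itself a substantial computation); the paper's argument is short and delegates the hard analysis to the literature, at the cost of constants that are purely existential (compactness gives no effective bound). Note that for the paper's purposes uniformity over the class is the whole point of the lemma, and the compactness trick obtains it even where the cited results do not state uniform constants.

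The gap in your proposal is the treatment of the bounded part of the sector, i.e.\ $z$ near $0$. First, the freezing argument only yields $R$-sectoriality of a \emph{shifted} operator $A-\nu$ for some large $\nu$ (your smallness $Kr^\mu$ and the commutator absorption work for $|z|\ge\nu$); to cover the full sector $\{|\arg z|<\theta\}$ down to $z\to 0$ you must prove $0\in\rho(A)$, i.e.\ that $[0,\infty)$ lies in the resolvent set, which your sketch nowhere establishes --- the paper needs the positivity of the semigroup and \cite[Theorem 9.15]{GT} exactly for this. Second, your statement that on the bounded part of the sector ``$\|z(z-A)^{-1}\|$ is already bounded there and an operator-norm perturbation suffices'' is false as written: on $L^q(\varOmega)$ with $q\ne 2$, $R$-boundedness is strictly stronger than uniform operator-norm boundedness, so a norm bound over the infinite set $\{z:\ |\arg z|\le\theta,\ 0<|z|\le\nu\}$ does not give an $R$-bound there. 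The standard repair is available once invertibility is known: $z\mapsto z(z-A)^{-1}$ is then analytic on a neighbourhood of the compact set $\{|\arg z|\le\theta,\ |z|\le\nu\}$ (taking $\theta$ slightly smaller than the angle obtained for large $|z|$), and an analytic operator family is $R$-bounded on compact subsets of its domain of analyticity, by expanding in power series and applying Kahane's contraction principle together with the permanence of $R$-bounds under absolutely convergent convex combinations. With these two additions --- invertibility of $A$ via the maximum principle/positivity, and the analyticity-on-compacts argument in place of the norm-perturbation claim --- your localization proof goes through and indeed delivers the uniform dependence on $(\mu,K,\kappa,\varOmega,q)$ directly, without the paper's compactness detour.
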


\begin{proof}$\quad$
We argue by compactnesss.
Fix $K$, $\mu$, $\kappa$, $q\in(d,\infty)$, and an angle $\theta\in(\pi/2,\pi)$.
We denote by $M$ the set of all symmetric coefficient matrices $(\alpha_{ij})$ on $\varOmega$ satisfying conditions  (A1) and (A2). Clearly, $M$ is convex and closed in $\| \cdot\|_{C^\mu(\overline{\varOmega})}$ but also in the $\sup$-norm on $\overline{\varOmega}$. By the Arzela-Ascoli theorem, $M$ is compact in $\sup$-norm.

For any coefficient matrix $(\alpha_{ij})\in M$, the corresponding operator $A$ generates an analytic semigroup by \cite[Subsection 3.1.1]{Lunardi95}. This semigroup is positive, so 
$\max \mbox{Re}\,\sigma(A)$ is an eigenvalue. By \cite[Theorem 9.15]{GT} the half line $[0,\infty)$ belongs to the resolvent set of $A$. Thus $A$ is invertible and generates a bounded analytic semigroup.  
Moreover, for some $\theta_A\in(\pi/2,\pi)$, the set $\{z(z-A)^{-1}:|\arg z|<\theta_A\}$ is $R$-bounded with $R$-bound $R(A)$ (see \cite{PS}, \cite[Theorem 1.1]{K} or \cite[7.18]{KW}). 

If  $(\widetilde{\alpha}_{ij})\in M$ is another coefficient matrix with corresponding operator $\widetilde{A}$,
then
\begin{eqnarray*}
 \|(\widetilde{A}-A)u\|_{L^q(\varOmega)}
 &\le&\max_{ij}\|\widetilde{\alpha}_{ij}-\alpha_{ij}\|_{L^\infty(\varOmega)}
    \|u\|_{W^{2,q}(\varOmega)}\\ 
&\le& C_A\max_{ij}\|\widetilde{\alpha}_{ij}-\alpha_{ij}\|_{L^\infty(\varOmega)}\|Au\|_{L^q(\varOmega)}
\end{eqnarray*}
since $A$ is invertible and $D(A)=D(\widetilde{A})=W^{2,q}(\varOmega)\cap W^{1,q}_0(\varOmega)$, note that $C_A$ depends on $A$.
By the perturbation theorem for $R$-sectorial operators (\cite[Theorem 6.5]{KW}) we find $\eta_A>0$ such that $(\widetilde{\alpha}_{ij})\in M$, $\|\widetilde{\alpha}_{ij}-\alpha_{ij}\|_\infty<\eta_A$ implies
that for the operator $\widetilde{A}$ corresponding to $(\widetilde{\alpha}_{ij})$ the set $\{z(z-\widetilde{A})^{-1}:|\arg z|<\theta_A\}$ is $R$-bounded with $R$-bound $\le 2R(A)$.
By compactness of $M$ we thus find finitely many matrices $(\alpha^l_{ij})$ with corresponding operators $A_l$, $l\in F$, such that for each coefficient matrix $(\alpha_{ij})\in M$ with corresponding operator $A$ there is $l\in F$ with $\|\alpha_{ij}-\alpha_{ij}^l\|_\infty<\eta_{A_l}$. We conclude that, for $\theta:=\min_{l\in F}\theta_{A_l}$, the set $\{z(z-A)^{-1}:|\arg|<\theta\}$ is $R$-bounded with $R$-bound $\le 2\max_{l\in F}R(A_l)$.  
\QED\end{proof}

\subsection{Maximal $L^p$ regularity}

\begin{lemma} \label{lem:maxreg}
Under the conditions of Lemma~\ref{lem:R-bound}, the operator $A$ has
maximal $L^p$ regularity for $1<p<\infty$: for every $f\in L^p(0,T;L^q(\varOmega))$ (with arbitrary $T>0$),
the solution $u$ of the linear parabolic problem 
\begin{equation} \label{lin-par}
\begin{aligned}
\left\{\begin{array}{ll}
\displaystyle 
\frac{\partial u}{\partial t}-A u=f 
&\mbox{in}\,\,\,\varOmega,\\[10pt]
u=0&\mbox{on}\,\,\,\partial\varOmega ,
\end{array}\right.
\end{aligned}
\end{equation}
with zero initial values is bounded by
\begin{align}
\label{maxreg}
       \|\partial_tu\|_{L^p(0,T;L^q(\varOmega))} 
       + \|u\|_{L^p(0,T;W^{2,q}(\varOmega))}  %\nonumber\\
       \leq C_{p,q} \|f\|_{L^p(0,T;L^q(\varOmega))} ,
\end{align}
where the constant $C_{p,q}$ depends only on $K$, $\kappa$, $\varOmega$ and $p$ and $q$.
\end{lemma}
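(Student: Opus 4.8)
The plan is to deduce maximal $L^p$ regularity from the $R$-boundedness established in Lemma~\ref{lem:R-bound} by invoking the characterization of Weis \cite{Weis01}. Under the hypotheses of Lemma~\ref{lem:R-bound}, the operator $A$ is $R$-sectorial: the family $\{z(z-A)^{-1} : |\arg z| < \theta\}$ is $R$-bounded for some $\theta \in (\pi/2,\pi)$, and (as noted in the proof of Lemma~\ref{lem:R-bound}) $A$ is invertible and generates a bounded analytic semigroup on $L^q(\varOmega)$. First I would recall that the underlying Banach space $L^q(\varOmega)$ with $1 < q < \infty$ is a UMD space, since $L^q$-spaces over $\sigma$-finite measure spaces are UMD for $q$ in this range. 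This is exactly the geometric hypothesis required by Weis's theorem.

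Next I would apply the Weis characterization: for a UMD space $E$, a sectorial operator $A$ of spectral angle less than $\pi/2$ has maximal $L^p$ regularity on $(0,\infty)$ for one (equivalently all) $p \in (1,\infty)$ if and only if $\{z(z-A)^{-1} : \arg z \in \varrho\}$ is $R$-bounded for some $\varrho > \pi/2$. Since $\theta > \pi/2$ by Lemma~\ref{lem:R-bound}, the $R$-boundedness hypothesis is met, so $A$ has maximal $L^p$ regularity on the half-line $(0,\infty)$, and a fortiori on each finite interval $(0,T)$. Concretely this gives, for zero initial data, the a~priori bound
\begin{equation*}
\|\partial_t u\|_{L^p(0,T;L^q(\varOmega))} + \|A u\|_{L^p(0,T;L^q(\varOmega))} \le C_{p,q}\,\|f\|_{L^p(0,T;L^q(\varOmega))}.
\end{equation*}

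To reach the stated form \eqref{maxreg} with the full $W^{2,q}$ norm on the left, I would invoke elliptic regularity for the operator $A$ on the domain $D(A) = W^{2,q}(\varOmega)\cap W^{1,q}_0(\varOmega)$: since $A$ is invertible with this domain, there is a constant (depending only on $K,\kappa,\varOmega,q$, via the estimates already used in Lemma~\ref{lem:R-bound}) such that $\|v\|_{W^{2,q}(\varOmega)} \le C\,(\|Av\|_{L^q(\varOmega)} + \|v\|_{L^q(\varOmega)})$, and the lower-order term is absorbed using invertibility of $A$ together with the time integration over $(0,T)$. This converts the bound on $\|Au\|_{L^p(0,T;L^q)}$ into the bound on $\|u\|_{L^p(0,T;W^{2,q})}$. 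Finally I would verify the claimed uniformity of the constant $C_{p,q}$: the compactness argument in Lemma~\ref{lem:R-bound} already produces a uniform $R$-bound and angle depending only on $\mu,K,\kappa,\varOmega,q$, and Weis's theorem turns a uniform $R$-bound into a uniform maximal-regularity constant, so $C_{p,q}$ indeed depends only on $K,\kappa,\varOmega,p,q$ and not on the particular coefficients $(\alpha_{ij})$.

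I expect the main subtlety to be bookkeeping of the two distinct formulations of maximal regularity (the abstract ``$\|Au\|$'' version delivered by Weis versus the concrete ``$\|u\|_{W^{2,q}}$'' version stated here) and the associated transition via elliptic regularity, together with making the uniformity of constants explicit; the $R$-boundedness itself, which is normally the hard analytic input, has already been secured in Lemma~\ref{lem:R-bound}, so the present lemma is essentially a citation of Weis's theorem combined with elementary elliptic estimates.
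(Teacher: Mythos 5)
Your proposal matches the paper's proof essentially step for step: the paper likewise applies Weis's characterization \cite[Theorem 4.2]{Weis01} to the $R$-bound from Lemma~\ref{lem:R-bound} to get $\|\partial_t u\|_{L^p(0,T;L^q(\varOmega))}+\|Au\|_{L^p(0,T;L^q(\varOmega))}\le C_{p,q}\|f\|_{L^p(0,T;L^q(\varOmega))}$ with $C_{p,q}$ depending only on $p,q$ and the $R$-bound, and then upgrades $\|Au\|_{L^q(\varOmega)}$ to $\|u\|_{W^{2,q}(\varOmega)}$ by elliptic regularity. The only cosmetic difference is in that last step: where you derive the estimate with a lower-order term and absorb it via invertibility of $A$, the paper directly cites \cite[Theorem 6.1 of Chapter 3]{CW98} for the invertibility of $A$ and the uniform bound $\|u\|_{W^{2,q}(\varOmega)}\le C_q\|Au\|_{L^q(\varOmega)}$ with $C_q$ depending only on $K$, $\kappa$, $\varOmega$ and $q$ --- which is also the cleanest way to secure the uniformity over the coefficient class that you assert somewhat informally.
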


\begin{proof}$\quad$ 
By Lemma~\ref{lem:R-bound}, the operator-valued Mikhlin multiplier theorem used in Weis' characterization of maximal $L^p$-regularity \cite[Theorem 4.2]{Weis01} yields the maximal $L^p$ regularity
\begin{align*}
       \|\partial_tu\|_{L^p(0,T;L^q(\varOmega))} 
       + \|Au\|_{L^p(0,T;L^q(\varOmega))}  %\nonumber\\
       \leq C_{p,q} \|f\|_{L^p(0,T;L^q(\varOmega))} ,
\end{align*}
where $C_{p,q}$ depends only on $p,q$ and the $R$-bound of Lemma~\ref{lem:R-bound}.

Since $\alpha_{ij}\in W^{1,q}(\varOmega)\hookrightarrow C^{\alpha}(\overline\varOmega)$ for some $\alpha\in(0,1)$, 
\cite[Theorem 6.1 of Chapter 3]{CW98} implies that 
the elliptic operator 
$A: W^{2,q}(\varOmega)\cap W^{1,q}_0(\varOmega) 
\rightarrow L^q(\varOmega)$ is invertible and 
\begin{align} \label{W2q-est}
\|u\|_{W^{2,q}(\varOmega)}
\le C_q\|Au\|_{L^{q}(\varOmega)} ,
\end{align}
where $C_q$ depends only on $K$, $\kappa$, $\varOmega$ and $q$.
This yields the result.
\QED\end{proof}

\subsection{Discrete maximal $\ell^p$ regularity for Runge--Kutta methods}

As is shown in \cite[Theorem 5.1]{KLL},  A-stable Runge--Kutta methods with an invertible coefficient matrix preserve maximal $L^p$ regularity, uniformly in the stepsize. Before we formulate the Runge--Kutta analog of Lemma~\ref{lem:maxreg}, we need to introduce further notation.

For any Banach space $X$ and any sequence 
$(v_n)_{n=1}^N$ with entries in $X$ we denote, for a given stepsize $\tau>0$, 
\[ \big\|(v_n)_{n=1}^N\big\|_{L^p(X)} 
:= \Bigl( \sum_{n=1}^N\tau  \| v_n\|_X^p \Bigr)^{1/p},\]
which is the $L^p(0,N\tau;X)$ norm of the 
piecewise constant function that equals $v_n$ on the time interval 
$(t_{n-1},t_n]$. We use the same notation also for sequences $(v_n)_{n=0}^N$, replacing $n=1$ by $n=0$ in the sum.

Considering the piecewise linear interpolant of a sequence  $(v_n)_{n=1}^N$ in $W^{2,q}(\varOmega)$ and the starting value $v_0=0$, 
Lemma~\ref{lem:sobolev} gives, 
for $2/p+d/q<1$,  
\begin{equation}\label{DImbedding} 
\begin{aligned}
&\|(v_n)_{n=1}^{N}\|_{L^\infty(W^{1,\infty}(\varOmega))}  \\
&\le c_{p,q} \left(
\bigg\|\bigg(\frac{v_n-v_{n-1}}{\tau}\bigg)_{n=1}^N
\bigg\|_{L^p(L^q(\varOmega))}
+\|(v_n)_{n=1}^N \|_{L^p(W^{2,q}(\varOmega))}\right) .
\end{aligned}
\end{equation} 

We now consider the Runge--Kutte time discretization of the linear parabolic problem \eqref{lin-par} with stepsize $\tau$,
\begin{equation}\label{lin-rk}
u_{n,i} = u_n + \tau \sum_{j=1}^s a_{ij} \bigl(Au_{n,j} + f_{n,j} \bigr)
\qquad (i=1,\dots,s),
\end{equation}
and $u_{n+1}=u_{n,s}$ for a Runge--Kutta method with \eqref{rk-cond}. We use again the vector notation of \eqref{vec-not}, $\vec u_n=(u_{n,i})_{i=1}^s$ and $\vec f_n=(f_{n,i})_{i=1}^s$. We then have the following time-discrete analog of Lemma~\ref{lem:maxreg}.

\begin{lemma} \label{lem:rk-maxreg}
Consider a Runge--Kutta method that satisfies \eqref{rk-cond}, such as the $s$-stage Radau IIA method. Under the conditions of Lemma~\ref{lem:R-bound}, there is discrete maximal $L^p$ regularity for $1<p<\infty$ uniformly in the stepsize $\tau>0$: for every sequence $(\vec f_n)_{n=0}^N$ with entries in $L^q(\varOmega)^s$ (with arbitrary $N\ge 1$), the numerical solution defined by \eqref{lin-rk} with zero initial value $u_0=0$ satisfies the bound, with $\vec u_{-1}=0$,
\begin{align}\label{un-un-1LpW2qD}
        \bigg\|\bigg(\frac{\vec u_n-\vec u_{n-1}}{\tau}\bigg)_{n=0}^N\bigg\|_{L^p(L^q(\varOmega)^s)} 
       +  \big\|(\vec u_n)_{n=0}^N\big\|_{L^p(W^{2,q}(\varOmega)^s)} \nonumber\\
       \leq C_{p,q}  \big\|(\vec f_{n})_{n=0}^N\big\|_{L^p(L^q(\varOmega)^s)},
      % \qquad\qquad \forall\, 1<p<\infty, 
\end{align}
where  the constant $C_{p,q}$ depends only on $K$, $\kappa$, $\varOmega$ and $p$ and $q$. In particular, $C_{p,q}$ is independent of $N$ and $\tau$. 
\end{lemma}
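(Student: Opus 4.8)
The plan is to invoke the discrete maximal regularity theorem \cite[Theorem 5.1]{KLL}, which states that an A-stable Runge--Kutta method with invertible coefficient matrix preserves maximal $L^p$ regularity uniformly in the stepsize, and then to recast its conclusion in the $W^{2,q}$ form asserted here by means of elliptic regularity. The hypotheses of \cite[Theorem 5.1]{KLL} are met verbatim: the method satisfies \eqref{rk-cond}, and by Lemma~\ref{lem:R-bound} the resolvent family $\{z(z-A)^{-1}:|\arg z|<\theta\}$ is $R$-bounded on $L^q(\varOmega)$ for some $\theta\in(\pi/2,\pi)$. This $R$-boundedness---and not mere sectoriality---is the decisive input, since it is what passes through Weis' multiplier machinery to the time-discrete level. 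Applying \cite[Theorem 5.1]{KLL} to \eqref{lin-rk} with $u_0=0$ and $\vec u_{-1}=0$ thus furnishes, uniformly in $\tau>0$ and $N$,
\begin{align*}
\bigg\|\bigg(\frac{\vec u_n-\vec u_{n-1}}{\tau}\bigg)_{n=0}^N\bigg\|_{L^p(L^q(\varOmega)^s)}
+\big\|(A\vec u_n)_{n=0}^N\big\|_{L^p(L^q(\varOmega)^s)}
\le C_{p,q}\,\big\|(\vec f_n)_{n=0}^N\big\|_{L^p(L^q(\varOmega)^s)},
\end{align*}
where $A$ is understood to act componentwise on each stage vector and $C_{p,q}$ depends only on $p$, $q$ and the $R$-bound of Lemma~\ref{lem:R-bound}.

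It then remains to trade the operator term for the Sobolev norm. Each stage $u_{n,i}$ belongs to $W^{2,q}(\varOmega)\cap W^{1,q}_0(\varOmega)$, as it carries the homogeneous Dirichlet condition, so the elliptic a priori bound \eqref{W2q-est} applies and gives $\|u_{n,i}\|_{W^{2,q}(\varOmega)}\le C_q\|Au_{n,i}\|_{L^q(\varOmega)}$ for every $i$ and $n$. Raising this to the $p$-th power and summing over the $s$ stages and over $n$ (with the weights $\tau$) converts the term $\big\|(A\vec u_n)_n\big\|_{L^p(L^q(\varOmega)^s)}$ into $\big\|(\vec u_n)_n\big\|_{L^p(W^{2,q}(\varOmega)^s)}$ up to the factor $C_q$; since $C_q$ depends only on $K$, $\kappa$, $\varOmega$, $q$, the resulting constant has exactly the asserted dependence, and \eqref{un-un-1LpW2qD} follows.

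The substantive difficulty is entirely encapsulated in \cite[Theorem 5.1]{KLL}: the hard part is the uniformity of the bound in the stepsize $\tau$, which cannot be obtained from sectoriality alone but requires the full $R$-boundedness of Lemma~\ref{lem:R-bound} to be fed into an operator-valued discrete (Marcinkiewicz-type) multiplier theorem adapted to the Runge--Kutta discretization. The uniformity of the final constant over the whole admissible class of operators is inherited directly from the uniform $R$-bound provided by Lemma~\ref{lem:R-bound}. Two points warrant only a passing remark: the boundary term at $n=0$, where the convention $\vec u_{-1}=0$ together with $u_0=0$ makes the difference quotient equal to $\vec u_0/\tau$, is consistent with the zero-initial-value setting of \cite[Theorem 5.1]{KLL}; and the passage between the stage derivatives $A\vec u_n+\vec f_n$ and the backward difference quotient $(\vec u_n-\vec u_{n-1})/\tau$ is immediate from \eqref{lin-rk} and \eqref{last-stage} with $\tau$-independent coefficients, so either quantity may equivalently appear on the left-hand side.
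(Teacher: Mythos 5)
Your proposal is correct and follows essentially the same route as the paper's proof: apply \cite[Theorem 5.1]{KLL} via the $R$-boundedness of Lemma~\ref{lem:R-bound}, convert $\|A\vec u_n\|_{L^q}$ to the $W^{2,q}$ norm by the elliptic a priori bound \eqref{W2q-est}, and pass from the stage derivatives to the backward difference quotients using the Runge--Kutta relations. The only difference is that you compress the last step into a remark, whereas the paper spells it out via the decomposition $u_{n,i}-u_{n-1,i}=(u_{n,i}-u_n)+(u_{n-1,s}-u_{n-1})-(u_{n-1,i}-u_{n-1})$, each piece being a $\tau$-independent linear combination of stage derivatives at steps $n$ and $n-1$ --- exactly the content of your ``immediate'' claim.
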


\begin{proof}$\quad$ In view of Lemma~\ref{lem:R-bound}, \cite[Theorem 5.1]{KLL}  gives the bound,
with $\vec{\dot u}_n = (\dot u_{n,j})_{j=1}^s$  for $\dot u_{n,j} = Au_{n,j}+f_{n,j}$,
$$
\| (\vec{\dot u}_n)_{n=0}^N \|_{L^p(L^q(\varOmega)^s)} + \| (A\vec{u_n})_{n=0}^N \|_{L^p(L^q(\varOmega)^s)} \le \widetilde C_{p,q}  \big\|(\vec f_{n})_{n=0}^N\big\|_{L^p(L^q(\varOmega)^s)},
$$
where $\widetilde C_{p,q}$ depends only on $p,q$ and the $R$-bound of Lemma~\ref{lem:R-bound}.

For the second term on the left-hand side we recall \eqref{W2q-est}. For the first term we note that \eqref{lin-rk} yields
$$
\biggl\| \biggl( \frac{u_{n,i}-u_n}\tau \biggr)_{i=1}^s \biggr\|_{L^q(\varOmega)^s} \le
\gamma \, \bigl\| \bigl( \dot u_{n,j}  \bigr)_{j=1}^s \bigr\|_{L^q(\varOmega)^s},
$$
where $\gamma$ is the norm of the Runge--Kutta coefficient matrix $(a_{ij})$. Writing
$$
u_{n,i}-u_{n-1,i} = (u_{n,i}-u_n) + (u_n-u_{n-1}) - (u_{n-1,i}-u_{n-1})
$$
and noting that $u_n-u_{n-1}=u_{n-1,s}-u_{n-1}$, we find that the above inequality (for $n$ and $n-1$) yields
$$
 \bigg\|\frac{\vec u_n-\vec u_{n-1}}{\tau}\bigg\|_{L^q(\varOmega)^s} 
 \le \gamma \, \bigl\| \bigl( \dot u_{n,j}  \bigr)_{j=1}^s \bigr\|_{L^q(\varOmega)^s} + 2
 \gamma \, \bigl\| \bigl( \dot u_{n-1,j}  \bigr)_{j=1}^s \bigr\|_{L^q(\varOmega)^s},
 $$
 which completes the proof of the result.% with $C_{p,q}=2\gamma \widetilde C_{p,q}$.
 \QED\end{proof}
 
 Combining Lemma~\ref{lem:rk-maxreg} and  \eqref{DImbedding}, we thus obtain the bound
\begin{equation}\label{rk-w18}
 \big\|(\vec u_n)_{n=0}^N\big\|_{L^\infty(W^{1,\infty}(\varOmega)^s)} %\nonumber\\
       \leq \widehat C_{p,q}  \big\|(\vec f_{n})_{n=0}^N\big\|_{L^p(L^q(\varOmega)^s)},
\end{equation}
 with $\widehat C_{p,q}=c_{p,q}C_{p,q}$.
 This $W^{1,\infty}$ bound of the numerical solution is the key to proving Theorem~\ref{thm:main}.

\subsection{Nonautonomous linear parabolic problems}
Let the time-dependent elliptic operators $A(t) :W^{2,q}(\varOmega)\cap W^{1,q}_0(\varOmega)\rightarrow 
L^q(\varOmega)$ for $0\le t \le T$ be defined by 
\begin{equation}
A(t)\varphi = \sum_{i,j=1}^d\alpha_{ij}(\cdot,t)\partial_{i}\partial_j\varphi  ,
\end{equation} 
where the coefficient functions $\alpha_{ij}(\cdot,t):\varOmega\to\R$ $(i,j=1,\dots,d)$ satisfy conditions (A1) and (A2) of Lemma~\ref{lem:R-bound} uniformly for $0\le t \le T$ and additionally the Lipschitz condition
\begin{equation}
 \label{Lip-t}
 \| \alpha_{ij}(\cdot,t) - \alpha_{ij}(\cdot,s) \|_{L^\infty(\varOmega)} \le L\, |t-s|,
 \qquad 0\le s,t\le T.
\end{equation}

\begin{lemma}
 \label{lem:maxreg-t}
 In the above situation of time-dependent elliptic operators $A(t)$, the solution of the nonautonomous linear problem \eqref{lin-par} is bounded by \eqref{maxreg}, where the constant $C_{p,q}$ depends additionally on $L$ and $T$.
\end{lemma}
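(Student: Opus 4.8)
The plan is to reduce the nonautonomous problem to the autonomous case of Lemma~\ref{lem:maxreg} by a standard freezing-of-coefficients argument combined with a perturbation estimate on short time intervals. First I would fix a reference time $t_*\in[0,T]$ and write the operator as $A(t) = A(t_*) + (A(t)-A(t_*))$. On the equation $\partial_t u - A(t)u = f$ this gives
\begin{equation*}
\partial_t u - A(t_*)u = f + \bigl(A(t)-A(t_*)\bigr)u =: \widetilde f .
\end{equation*}
By Lemma~\ref{lem:maxreg}, the frozen operator $A(t_*)$ satisfies maximal $L^p$ regularity with a constant $C_{p,q}$ that is \emph{uniform} in $t_*$, since (A1) and (A2) hold uniformly in $t$ and the constant in Lemma~\ref{lem:maxreg} depends only on $K$, $\kappa$, $\varOmega$, $p$, $q$. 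The Lipschitz condition \eqref{Lip-t} then controls the perturbation: on a subinterval $J=[a,b]\subset[0,T]$ of length $\delta=b-a$ one has, choosing $t_*=a$,
\begin{equation*}
\|(A(t)-A(a))u\|_{L^q(\varOmega)} \le d^2 L\,|t-a|\,\|u\|_{W^{2,q}(\varOmega)} \le d^2 L\,\delta\,\|u\|_{W^{2,q}(\varOmega)},
\end{equation*}
so that $\|(A(\cdot)-A(a))u\|_{L^p(J;L^q)} \le d^2 L\,\delta\,\|u\|_{L^p(J;W^{2,q})}$.

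The key step is then an absorption argument on a short interval. Applying the frozen maximal regularity estimate on $J$ with zero initial value at $t=a$ gives
\begin{equation*}
\|\partial_t u\|_{L^p(J;L^q)} + \|u\|_{L^p(J;W^{2,q})} \le C_{p,q}\bigl(\|f\|_{L^p(J;L^q)} + d^2 L\,\delta\,\|u\|_{L^p(J;W^{2,q})}\bigr).
\end{equation*}
Choosing $\delta=\delta_0$ small enough that $C_{p,q}\,d^2 L\,\delta_0 \le \tfrac12$, the perturbation term is absorbed into the left-hand side, yielding the maximal regularity bound on each interval of length $\delta_0$ with a doubled constant, and with the norm of $f$ on that interval on the right. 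The subtlety here is that on the first subinterval the initial value is genuinely zero, but on subsequent subintervals one must restart with the value $u(a)$ inherited from the previous interval, which is nonzero; I would handle this by absorbing the inhomogeneous-initial-value contribution into the data via the trace/interpolation embedding \eqref{Imbedding} (the value $u(a)$ lies in the trace space $B^{2-2/p;q,p}$ and is already controlled by the previous interval's norm), or more cleanly by testing the global equation and running a Gronwall-type continuation argument.

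Finally I would patch the local estimates together. Splitting $[0,T]$ into $\lceil T/\delta_0\rceil$ consecutive intervals of length $\delta_0$ and summing the local bounds, with each interval's initial data dominated by the maximal regularity norm on the preceding interval, produces the global estimate \eqref{maxreg} with a constant that depends on $C_{p,q}$, on $\delta_0$ (hence on $L$ through the smallness requirement $C_{p,q}d^2L\delta_0\le\tfrac12$), and on the number $\lceil T/\delta_0\rceil$ of intervals (hence on $T$). This is exactly the claimed dependence of $C_{p,q}$ on the additional parameters $L$ and $T$. The main obstacle I anticipate is the bookkeeping of nonzero initial data when passing from one subinterval to the next: one needs the trace-space bound to show that $u(a)$ does not spoil the constant, and to ensure the accumulated constant over $\lceil T/\delta_0\rceil$ intervals stays finite and depends only on the stated quantities rather than blowing up. Everything else is a routine freezing-and-perturbation scheme in the spirit of Acquistapace--Terreni / Pr\"uss--Schnaubelt nonautonomous maximal regularity theory.
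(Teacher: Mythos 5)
Your localization scheme (freeze the coefficients at the left endpoint of a short interval, absorb the Lipschitz perturbation by choosing $\delta_0$ small, then patch $\lceil T/\delta_0\rceil$ intervals together) is a genuinely different route from the paper's, and the obstacle you flag yourself --- nonzero initial data $u(a)$ on every subinterval after the first --- is precisely where your argument, as written, is not closed by the paper's tools. Lemma~\ref{lem:maxreg} is stated only for zero initial values, so on $J=[a,b]$ you must split off the homogeneous evolution with initial value $u(a)$ and invoke maximal $L^p$ regularity for initial data in the trace space $(L^q(\varOmega),W^{2,q}(\varOmega))_{1-1/p,p}=B^{2-2/p;q,p}(\varOmega)$, together with the bound $\|u(a)\|_{B^{2-2/p;q,p}(\varOmega)}\le C\bigl(\|\partial_t u\|_{L^p(J';L^q(\varOmega))}+\|u\|_{L^p(J';W^{2,q}(\varOmega))}\bigr)$ coming from the preceding interval $J'$. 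The trace bound follows from the embedding \eqref{Imbedding}, but the nonzero-initial-data maximal regularity estimate is an extra ingredient: it is standard in the nonautonomous literature you cite (Acquistapace--Terreni, Pr\"uss--Schnaubelt), yet it is not among the paper's lemmas, so your proof is complete only modulo importing it.

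The paper avoids this entirely with one twist worth internalizing: it freezes the coefficients at the \emph{right} endpoint $\bar t$ and applies Lemma~\ref{lem:maxreg} on the whole interval $[0,\bar t]$, where the initial value is genuinely zero, so no restarting and no trace spaces ever enter. The Lipschitz condition \eqref{Lip-t} then gives $\|(A(\bar t)-A(t))u(t)\|_{L^q(\varOmega)}\le L(\bar t - t)\|u(t)\|_{W^{2,q}(\varOmega)}$, and instead of absorbing by smallness of the interval, the paper sets $\eta(\bar t)=\|u\|_{L^p(0,\bar t;W^{2,q}(\varOmega))}^p$, integrates by parts to obtain $\eta(\bar t)\le C\int_0^{\bar t}(\bar t - t)^{p-1}\eta(t)\,\d t + C\|f\|_{L^p(0,\bar t;L^q(\varOmega))}^p$, and closes with a Gronwall inequality in $\bar t$ --- which is where the dependence on $L$ and $T$ enters. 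This buys a shorter proof with no interval bookkeeping; your scheme, once supplemented as above, is also correct and arguably more robust (it is the standard freezing argument), but it is more machinery than this lemma needs. Your fallback remark about ``a Gronwall-type continuation argument'' is in fact the paper's proof in embryo: executing it directly, with the right-endpoint freeze, removes the only gap in your proposal.
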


\begin{proof}$\quad$
 For $0\le t \le \bar t \le T$, we rewrite the differential equation as
 $$
 \partial_t u(t) = A(\bar t)u(t) - \bigl( A(\bar t) - A(t) \bigr)u(t) + f(t)
 $$
 and apply Lemma~\ref{lem:maxreg} for the operator $A(\bar t)$ to bound
 \begin{align} \label{maxreg-prov}
     \|\partial_tu\|_{L^p(0,\bar t;L^q(\varOmega))} 
       + \|u\|_{L^p(0,\bar t;W^{2,q}(\varOmega))}  
      & \leq C_{p,q} \|  \bigl( A(\bar t) - A(\cdot) \bigr)u \|_{L^p(0,\bar t;L^q(\varOmega))} 
     \\ &\quad +  C_{p,q} \|f\|_{L^p(0,\bar t;L^q(\varOmega))} . \nonumber
 \end{align}
We denote
$$
\eta(\bar t)= \| u \|_{L^p(0,\bar t;W^{2,q}(\varOmega))} ^p.
$$
By the Lipschitz condition \eqref{Lip-t} and by partial integration we obtain
\begin{align*}
\int_0^{\bar t}  \|\bigl( A(\bar t) - A(t) \bigr)u(t)\|_{L^q(\varOmega)}^p \,\d t &\le 
L^p \int_0^{\bar t} (\bar t -t)^p \| u(t) \|_{W^{2,q}(\varOmega)}^p \,\d t
\\ &= L^{p}p\int_0^{\bar t} (\overline t - t)^{p-1}\eta(t)\,\d t.
\end{align*}
Hence we have from \eqref{maxreg-prov}
$$
\eta(\bar t) \le C \int_0^{\bar t} (\overline t - t)^{p-1}\eta(t)\,\d t + C \|f\|_{L^p(0,\bar t;L^q(\varOmega))}^p, \qquad 0\le \bar t \le T,
$$
and a Gronwall inequality yields
$$
\eta(T) \le C' \|f\|_{L^p(0,T;L^q(\varOmega))}^p,
$$
which combined with \eqref{maxreg-prov} yields the result.
\QED\end{proof}

\subsection{Runge--Kutta discretization of nonautonomous linear problems}
With Lemma~\ref{lem:rk-maxreg}, the previous result for the nonautonomous linear problem extends to its Runge--Kutta time discretization
\begin{equation}\label{lin-rk-t}
u_{n,i} = u_n + \tau \sum_{j=1}^s a_{ij} \bigl(A(t_{n,j})u_{n,j} + f_{n,j} \bigr)
\qquad (i=1,\dots,s),
\end{equation}
and $u_{n+1}=u_{n,s}$ for a Runge--Kutta method with \eqref{rk-cond}.

\begin{lemma} \label{lem:rk-maxreg-t}
Consider a Runge--Kutta method that satisfies \eqref{rk-cond}, such as the $s$-stage Radau IIA method. Under the conditions of Lemma~\ref{lem:maxreg-t}, there is discrete maximal $L^p$ regularity for $1<p<\infty$ uniformly in the stepsize $\tau>0$: for every sequence $(\vec f_n)_{n=0}^N$ with entries in $L^q(\varOmega)^s$ (with arbitrary $N\ge 1$), the numerical solution defined by \eqref{lin-rk-t} with zero initial value $u_0=0$ satisfies the bound \eqref{un-un-1LpW2qD}, where $C_{p,q}$ is independent of $N$ and $\tau$ with $N\tau\le T$, but depends on $T$. 
\end{lemma}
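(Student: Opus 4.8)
The plan is to imitate the freezing-and-Gronwall argument of Lemma~\ref{lem:maxreg-t}, using the autonomous discrete maximal regularity of Lemma~\ref{lem:rk-maxreg} in place of Lemma~\ref{lem:maxreg} and a summation over the grid in place of the partial integration. Fix an index $m$ with $0\le m\le N$ and freeze the operator at $\bar t=t_m$. Rewriting $A(t_{n,j})u_{n,j}=A(t_m)u_{n,j}-\bigl(A(t_m)-A(t_{n,j})\bigr)u_{n,j}$ in \eqref{lin-rk-t}, the stages $(\vec u_n)_{n=0}^m$ solve the \emph{autonomous} Runge--Kutta scheme for the frozen operator $A(t_m)$ with zero initial value $u_0=0$ and modified data $g_{n,j}=f_{n,j}-\bigl(A(t_m)-A(t_{n,j})\bigr)u_{n,j}$. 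Since the coefficients satisfy (A1)--(A2) uniformly in $t$, Lemma~\ref{lem:rk-maxreg} applies to $A(t_m)$ with a constant $C_{p,q}$ independent of $m$, and yields in particular, with $\eta_m:=\|(\vec u_n)_{n=0}^m\|_{L^p(W^{2,q}(\varOmega)^s)}^p$,
\[ \eta_m \le C_{p,q}^p\,\big\|(\vec g_n)_{n=0}^m\big\|_{L^p(L^q(\varOmega)^s)}^p . \]

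Next I would estimate the perturbation. From the definition of $A(t)$ and the Lipschitz bound \eqref{Lip-t} one gets $\|(A(t_m)-A(t_{n,j}))u_{n,j}\|_{L^q(\varOmega)}\le cL\,|t_m-t_{n,j}|\,\|u_{n,j}\|_{W^{2,q}(\varOmega)}$, and since $0\le c_j\le 1$ we have the elementary bounds $|t_m-t_{n,j}|\le t_m-t_n=(m-n)\tau$ for $n<m$ and $|t_m-t_{m,j}|\le\tau$. Combining these with the previous display gives
\[ \eta_m \le C\big\|(\vec f_n)_{n=0}^m\big\|_{L^p(L^q(\varOmega)^s)}^p + CL^p\sum_{n=0}^{m-1}\tau\,(t_m-t_n)^p\,\|\vec u_n\|_{W^{2,q}(\varOmega)^s}^p + CL^p\tau^p\,\eta_m . \]
The decisive step --- the discrete counterpart of the partial integration in Lemma~\ref{lem:maxreg-t} --- is to write $(t_m-t_n)^p\le T^{p-1}(t_m-t_n)$ and to carry out the summation by parts (Fubini on the grid), which turns the weighted sum of the increments $\tau\|\vec u_n\|^p$ into a cumulative sum of the $\eta_k$:
\[ \sum_{n=0}^{m-1}\tau\,(t_m-t_n)\,\|\vec u_n\|_{W^{2,q}(\varOmega)^s}^p = \tau\sum_{k=0}^{m-1}\eta_k . \]

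For $\tau$ below a threshold depending only on $C$, $L$ and $p$ (so that $CL^p\tau^p\le\tfrac12$; this is the regime of interest, and the resulting constant stays uniform in $\tau$) the diagonal term $CL^p\tau^p\eta_m$ is absorbed into the left-hand side, leaving $\eta_m\le C'\|(\vec f_n)_{n=0}^m\|_{L^p(L^q(\varOmega)^s)}^p + C'T^{p-1}\tau\sum_{k=0}^{m-1}\eta_k$. A discrete Gronwall inequality then gives $\eta_m\le C''\|(\vec f_n)_{n=0}^N\|_{L^p(L^q(\varOmega)^s)}^p$ uniformly for all $m$ with $m\tau\le T$, which is the $W^{2,q}$ part of \eqref{un-un-1LpW2qD}. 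Finally, for the difference quotient I would freeze once more at the endpoint $\bar t=t_N$ and apply Lemma~\ref{lem:rk-maxreg} over the full range $0\le n\le N$: since the bound on $\|(\vec u_n)_{n=0}^N\|_{L^p(W^{2,q}(\varOmega)^s)}$ is now known, the crude estimate $|t_N-t_{n,j}|\le T+\tau$ shows $\|(\vec g_n)_{n=0}^N\|_{L^p(L^q(\varOmega)^s)}\le C\|(\vec f_n)_{n=0}^N\|_{L^p(L^q(\varOmega)^s)}$ (no smallness needed here), and the lemma delivers the bound on the difference quotient as well, completing \eqref{un-un-1LpW2qD}. I expect the main obstacle to be precisely the treatment of the non-small weight $(t_m-t_n)^p$: the Fubini identity together with the factor $T^{p-1}$ reduces it to a cumulative sum suitable for a discrete Gronwall, while the only place a smallness condition on $\tau$ enters is the single diagonal contribution.
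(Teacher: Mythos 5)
Your proposal is correct and is precisely the paper's intended argument: the paper's proof of Lemma~\ref{lem:rk-maxreg-t} consists of the single remark that it follows from Lemma~\ref{lem:rk-maxreg} just as Lemma~\ref{lem:maxreg-t} follows from Lemma~\ref{lem:maxreg}, ``using a partial summation in place of the partial integration'', and your freezing at $\bar t=t_m$ with perturbed data $g_{n,j}$, the grid Fubini identity $\sum_{n=0}^{m-1}\tau\,(t_m-t_n)\|\vec u_n\|^p_{W^{2,q}(\varOmega)^s}=\tau\sum_{k=0}^{m-1}\eta_k$ (the partial summation), and the discrete Gronwall step are exactly that elaboration, including the endpoint refreezing for the difference-quotient term. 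The one wrinkle you rightly flag --- the diagonal term $CL^p\tau^p\eta_m$, which has no continuous counterpart and is absorbed only for $\tau$ below a threshold depending on $L,p,q$ --- is inherent to any discrete version of this freezing argument and is harmless here (the applications in Theorems~\ref{thm:main} and \ref{thm:main-2} assume $\tau$ small anyway), even though it means the ``uniformly in $\tau>0$'' phrasing of the lemma is established verbatim only below that threshold.
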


\begin{proof}$\quad$
 The result follows from Lemma~\ref{lem:rk-maxreg} in the same way as Lemma~\ref{lem:maxreg-t} follows from Lemma~\ref{lem:maxreg}, using a partial summation in place of the partial integration.
\QED\end{proof}

\section{Proof of Theorem \ref{thm:main}}\label{sec:proof-main-thm}

\subsection{Defects and error equation}
The exact solution values satisfy the Runge--Kutta relations up to a defect:
$$
u(t_n+c_i\tau) = u(t_n) + \tau \sum_{j=1}^s a_{ij} \, \partial_tu(t_n+c_j\tau) + d_{n,i},
$$
where we note that $d_{n,i}$ is the quadrature error over the interval $[t_n,t_n+c_i\tau]$ of the quadrature formula with weights $a_{ij}$ and nodes $c_j$. Using Taylor expansion at $t_n$ and the definition of the stage order \eqref{stage-order} and the regularity condition \eqref{Regularity}, we can bound  $\vec d_n =(d_{n,i})_{i=1}^s$ by
$$
\| (\vec d_n )_{n=0}^N \|_{L^p(L^q(\varOmega)^s)} \le C\tau^{k+1}.
$$
We rewrite the above equation as
\begin{align}\label{ExactEq}
u(t_n+c_i\tau) = u(t_n) + \tau \sum_{j=1}^s a_{ij} \, (\partial_tu(t_n+c_j\tau) -r_{n,j}),
\end{align}
where $\vec r_n = (r_{n,j})_{j=1}^s$ is the solution of the linear system with the invertible Runge--Kutta matrix $(a_{ij})$,
\begin{equation}\label{r-bound}
\tau \sum_{j=1}^s a_{ij} r_{n,j} = -d_{n,i}, \qquad\text{so that}\quad \rho:= \| (\vec r_n )_{n=0}^N \|_{L^p(L^q(\varOmega)^s)} \le C\tau^{k}.
\end{equation}
We rewrite the partial differential equation as
\begin{align}\label{ExDerivative}
\partial_t u = \nabla \cdot f(\nabla u) =  \sum_{k,l=1}^d f_{k,l}(\nabla u) \partial_k\partial_l u,
\quad\text{ with }\ f_{k,l}=\partial f_k/\partial p_l.
\end{align}
Comparing \eqref{RK} and \eqref{RKDerivative} with 
\eqref{ExactEq} and \eqref{ExDerivative}, we see that
the errors 
\begin{align}\label{Error1}
e_{n,i}:= u_{n,i}-u(t_{n,i})
\quad\mbox{and}\quad
\dot e_{n,j}:= \dot u_{n,j} - \partial_tu(t_{n,j}) +r_{n,j}
\end{align} 
 satisfy the error equations (for $i,j=1,\dots,s$)
\begin{subequations}\label{err-eq}
\begin{align}
\label{err-eq-1}
 & e_{n,i} = e_n + \tau \sum_{j=1}^s a_{ij} \dot e_{n,j} ,\qquad\quad e_{n+1}=e_{n,s}
 \\
 \nonumber
 & \dot e_{n,j} = \sum_{k,l=1}^d f_{k,l}(\nabla u(t_{n,j})) \partial_k\partial_l e_{n,j} 
 \\ & +
 \sum_{k,l=1}^d \Bigl( f_{k,l}(\nabla u(t_{n,j}) + \nabla e_{n,j}) - f_{k,l}(\nabla u(t_{n,j})) \Bigr)
\partial_k\partial_l   \bigl( u(t_{n,j}) + e_{n,j} \bigr) +r_{n,j}.
\label{err-eq-2}
\end{align}
\end{subequations}
Clearly, $\vec e_n=(e_{n,i})$ is a solution of the error equations \eqref{err-eq} if and only if $(u_{n,i})=(u(t_{n,i})+e_{n,i})$ is a solution of the Runge--Kutta equations \eqref{last-stage}-\eqref{RK}.

\subsection{Error bound}

We first show the error bound of Theorem~\ref{thm:main} under the additional assumption that the errors remain bounded by a small constant in the $W^{1,\infty}$ norm. This condition will be verified in the next subsection.

\begin{lemma} \label{lem:error-bound} In the situation of Theorem~\ref{thm:main}, suppose that the error equations have a solution $(e_{n,i})$  for $0\le n \le N$ and $i=1,\dots,s$ such that
$$
\max_{0\le n \le N} \max_{1\le i \le s}\| e_{n,i} \|_{W^{1,\infty}(\varOmega)} \le \mu
% \bigg\|\bigg(\frac{\vec e_n-\vec e_{n-1}}{\tau}\bigg)_{n=0}^N\bigg\|_{L^p(L^q(\varOmega)^s)} 
%       +  \big\|(\vec e_n)_{n=0}^N\big\|_{L^p(W^{2,q}(\varOmega)^s)} \le 1.
$$
with a sufficiently small constant $\mu$ (independent of $\tau$ and $N$ with $N\tau\le T$).
Then the $O(\tau^k)$ error bounds \eqref{err-est} are satisfied.
\end{lemma}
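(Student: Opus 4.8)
The plan is to read the second error equation \eqref{err-eq-2} as the Runge--Kutta discretization \eqref{lin-rk-t} of a \emph{nonautonomous linear} parabolic problem and then to invoke the discrete maximal regularity of Lemma~\ref{lem:rk-maxreg-t}. Concretely, I would freeze the elliptic operator along the exact solution,
\[
A(t)\varphi = \sum_{k,l=1}^d f_{k,l}(\nabla u(\cdot,t))\,\partial_k\partial_l\varphi,
\qquad f_{k,l}=\partial f_k/\partial p_l,
\]
so that \eqref{err-eq-2} reads $\dot e_{n,j} = A(t_{n,j})e_{n,j} + g_{n,j}$ with forcing $g_{n,j}=g^A_{n,j}+g^B_{n,j}+r_{n,j}$, where
\[
g^A_{n,j}=\sum_{k,l}\bigl(f_{k,l}(\nabla u(t_{n,j})+\nabla e_{n,j})-f_{k,l}(\nabla u(t_{n,j}))\bigr)\partial_k\partial_l e_{n,j}
\]
is the coefficient perturbation acting on the error and $g^B_{n,j}$ is the analogous term acting on $\partial_k\partial_l u(t_{n,j})$ instead. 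I would first check that the coefficients $\alpha_{kl}=f_{k,l}(\nabla u(\cdot,t))$ meet the hypotheses of Lemma~\ref{lem:rk-maxreg-t}: the ellipticity (A2) follows from \eqref{locally-elliptic} together with the fact that, since $q>d$, the embedding $W^{2,q}(\varOmega)\hookrightarrow C^1(\overline\varOmega)$ forces $\nabla u$ to range in a compact set, on which the symmetric part of $\partial_pf$ is uniformly positive definite; the H\"older bound (A1) and the requisite time-regularity come from the smoothness of $f$ and the regularity \eqref{Regularity} of $u$.

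With this in place, I would apply Lemma~\ref{lem:rk-maxreg-t} not once on $[0,T]$ but on every subinterval $[0,t_m]$, $m\le N$, writing $\mathcal E_m$ for the discrete maximal-regularity norm (the left-hand side of \eqref{un-un-1LpW2qD}) of the truncated sequence $(\vec e_n)_{n=0}^m$. This yields $\mathcal E_m \le C_{p,q}\|(\vec g_n)_{n=0}^m\|_{L^p(L^q(\varOmega)^s)}$. The defect contribution is $O(\tau^k)$ by \eqref{r-bound}. The term $g^A$ is quadratic in the error, $\|g^A_{n,j}\|_{L^q}\le C\|\nabla e_{n,j}\|_{L^\infty}\|e_{n,j}\|_{W^{2,q}}\le C\mu\,\|e_{n,j}\|_{W^{2,q}}$, so its $L^p$-in-time norm is bounded by $C\mu\,\mathcal E_m$ and, for $\mu$ small enough, absorbed into the left-hand side.

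The main obstacle is the term $g^B$, the coefficient perturbation multiplying the \emph{second derivatives of the exact solution}, for which only $\|g^B_{n,j}\|_{L^q}\le C\|\nabla e_{n,j}\|_{L^\infty}\,\|u(t_{n,j})\|_{W^{2,q}}\le C\|\nabla e_{n,j}\|_{L^\infty}$ is available. This cannot be made $O(\tau^k)$ by the smallness of $\mu$ alone, nor absorbed as a small multiple of $\mathcal E_m$, since its prefactor $\|u\|_{C([0,T];W^{2,q})}$ is only $O(1)$. The way around this is a Gronwall argument: I would feed the $W^{1,\infty}$ bound back into the estimate through the discrete Sobolev embedding \eqref{DImbedding}, which applied to the truncated sequence gives $\|\nabla e_n\|_{L^\infty}\le \widehat C_{p,q}\,\mathcal E_n$. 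Raising the resulting inequality to the $p$-th power and absorbing the $g^A$ and diagonal contributions (using $\mu$ and $\tau$ small) leads to $\mathcal E_m^p \le C\tau^{kp} + C\sum_{n=0}^{m-1} \tau\,\mathcal E_n^p$, and a discrete Gronwall inequality then gives $\mathcal E_m^p \le C\tau^{kp}e^{Ct_m}$, hence $\mathcal E_N\le C\tau^k$.

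Finally, the bound $\mathcal E_N\le C\tau^k$ is exactly the $L^p$ statement \eqref{est2}, after rewriting $\dot{\vec u}_n-\partial_t u(\vec t_n)=\vec{\dot e}_n-\vec r_n$ and using the Runge--Kutta relation \eqref{err-eq-1} to control $\vec{\dot e}_n$ by $\mathcal E_N$; and the $W^{1,\infty}$ estimate \eqref{est1} follows from it by one further application of the embedding \eqref{rk-w18}, which gives $\|(\vec e_n)_{n=0}^N\|_{L^\infty(W^{1,\infty}(\varOmega)^s)}\le \widehat C_{p,q}\,\mathcal E_N\le C\tau^k$.
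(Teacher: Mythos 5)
Your proposal is correct in its essentials and shares the paper's main architecture: freeze the elliptic operator along the exact solution, read \eqref{err-eq-2} as the Runge--Kutta discretization of a nonautonomous linear problem, apply the discrete maximal regularity of Lemma~\ref{lem:rk-maxreg-t} on every truncated interval $[0,t_m]$, absorb the quadratic-in-error forcing (your $g^A$) by the smallness of $\mu$, and close with a discrete Gronwall inequality (the absorption of the diagonal term $n=m$ needs $\tau$ small, which is covered by the hypothesis $\tau<\tau_1$). Where you genuinely diverge is in the mechanism for the linear-in-error term $g^B$. You pipe $\|\nabla e_{n,j}\|_{L^\infty(\varOmega)}$ back through the discrete space-time embedding \eqref{DImbedding} applied to truncated sequences, obtaining $\|\vec e_n\|_{W^{1,\infty}(\varOmega)^s}\le \widehat C_{p,q}\,\mathcal{E}_n$, and run Gronwall on $\mathcal{E}_m^p$. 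The paper instead stays purely spatial: it uses the Ehrling-type inequality $\|\vec e_n\|_{W^{1,\infty}(\varOmega)^s}\le \mu\|\vec e_n\|_{W^{2,q}(\varOmega)^s}+C_\mu\|\vec e_n\|_{L^q(\varOmega)^s}$ (from the compact embedding $W^{2,q}(\varOmega)\hookrightarrow W^{1,\infty}(\varOmega)$ for $q>d$), absorbs the $W^{2,q}$ part, and controls the remaining $\|(\vec e_n)\|_{L^p(L^q(\varOmega)^s)}$ by telescoping $\vec e_n=\tau\sum_{m\le n}(\vec e_m-\vec e_{m-1})/\tau$ together with the triangle-inequality trick \eqref{triangle-inequality}, before invoking Gronwall. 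Both routes are valid; yours trades the telescoping device for an extra use of the Sobolev embedding inside the loop.

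That trade has one substantive cost you should address: \eqref{DImbedding} requires $2/p+d/q<1$, so your Gronwall argument proves \eqref{est2} directly only for $p>2/(1-d/q)$, whereas the lemma asserts it for all $1<p<\infty$ and the paper's spatial-interpolation-plus-telescoping argument works for every such $p$. The gap is easily patched: once \eqref{est1} is established via one admissible pair $(p,q)$, you know $\|\nabla e_{n,j}\|_{L^\infty(\varOmega)}\le C\tau^k$ uniformly, hence $\|g^B_{n,j}\|_{L^q(\varOmega)}\le C\tau^k$, and rerunning the maximal-regularity estimate for arbitrary $p\in(1,\infty)$ (absorbing $g^A$ as before) yields \eqref{est2} with no Gronwall at all; but as written your proof does not deliver the full range of $p$. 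Two minor points: your verification of (A1)--(A2) for $\alpha_{kl}=f_{k,l}(\nabla u)$ is fine, but Lemma~\ref{lem:rk-maxreg-t} also needs the time-modulus condition \eqref{Lip-t}, which follows from \eqref{Regularity} only by space-time interpolation (the paper leaves this implicit too); and in the last step it is cleaner to apply \eqref{DImbedding} directly to the error sequence than to cite \eqref{rk-w18}, which is stated for the linear problem \eqref{lin-rk}. With the $p$-range patch, your argument is complete.
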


\begin{proof}$\quad$ If we consider $\vec g_n =(g_{n,j})$ with
$$
g_{n,j} =  \sum_{k,l=1}^d \Bigl( f_{k,l}(\nabla u(t_{n,j}) + \nabla e_{n,j}) - f_{k,l}(\nabla u(t_{n,j})) \Bigr)
\partial_k\partial_l   \bigl( u(t_{n,j}) + e_{n,j} \bigr)
$$
as an inhomogeneity in \eqref{err-eq-2}, then Lemma~\ref{lem:rk-maxreg-t} shows that
\begin{align} \label{e-bound}
&  \bigg\|\bigg(\frac{\vec e_n-\vec e_{n-1}}{\tau}\bigg)_{n=0}^N\bigg\|_{L^p(L^q(\varOmega)^s)} 
       +  \big\|(\vec e_n)_{n=0}^N\big\|_{L^p(W^{2,q}(\varOmega)^s)} %\nonumber
       \\
      & \qquad\qquad\qquad\leq C  \Bigl( \big\|(\vec g_{n})_{n=0}^N\big\|_{L^p(L^q(\varOmega)^s)}
      + \big\|(\vec r_{n})_{n=0}^N\big\|_{L^p(L^q(\varOmega)^s)} \Bigr).
      \nonumber
\end{align}
We bound, with a local Lipschitz constant $L$ of $f_{k,l}$,
\begin{equation}\label{g-bound}
\| g_{n,j} \|_{L^q(\varOmega)} \le 
L \| \nabla e_{n,j} \|_{L^\infty(\varOmega)}\,\| u(t_{n,j}) \|_{W^{2,q}(\varOmega)} + 
L \|  \nabla e_{n,j} \|_{L^\infty(\varOmega)}\,\| e_{n,j} \|_{W^{2,q}(\varOmega)}
\end{equation}
so that
$$
\big\|(\vec g_{n})_{n=0}^N\big\|_{L^p(L^q(\varOmega)^s)} \le 
C_1 \big\|(\vec e_{n})_{n=0}^N\big\|_{L^p(W^{1,\infty}(\varOmega)^s)} + 
C_2\mu  \big\|(\vec e_n)_{n=0}^N\big\|_{L^p(W^{2,q}(\varOmega)^s)}.
$$
Using the bound
$$
\| \vec e_{n} \|_{W^{1,\infty}(\varOmega)^s} \le \mu \| \vec e_{n} \|_{W^{2,q}(\varOmega)^s} +
C_\mu \| \vec e_{n} \|_{L^{q}(\varOmega)^s},
$$ 
we obtain
$$
\big\|(\vec g_{n})_{n=0}^N\big\|_{L^p(L^q(\varOmega)^s)} \le 
C\mu  \big\|(\vec e_n)_{n=0}^N\big\|_{L^p(W^{2,q}(\varOmega)^s)} + 
C_\mu\big\|(\vec e_n)_{n=0}^N\big\|_{L^p(L^{q}(\varOmega)^s)}.
$$
If $\mu$ is sufficiently small, then the first term on the right-hand side can be absorbed in the left-hand side of \eqref{e-bound}, and we are left with 
\begin{align*}
&  \bigg\|\bigg(\frac{\vec e_n-\vec e_{n-1}}{\tau}\bigg)_{n=0}^N\bigg\|_{L^p(L^q(\varOmega)^s)} 
       +  \big\|(\vec e_n)_{n=0}^N\big\|_{L^p(W^{2,q}(\varOmega)^s)} %\nonumber
       \\
      & \qquad\qquad\qquad\leq C  \Bigl( \big\|(\vec e_{n})_{n=0}^N\big\|_{L^p(L^q(\varOmega)^s)}
      + \big\|(\vec r_{n})_{n=0}^N\big\|_{L^p(L^q(\varOmega)^s)} \Bigr).
\end{align*}
Such a bound holds not only for the final $N$, but for each $\bar n \le N$. We write
$\vec e_n =
\tau\sum_{m=0}^n(\vec e_m-\vec e_{m-1})/\tau$ and use, for $\alpha_j=\frac1\tau\| \vec e_j-\vec e_{j-1}\|_{L^q(\varOmega)^s}$, the inequality
\begin{equation}\label{triangle-inequality}
 \biggl\| \biggl( \sum_{j=0}^m \alpha_j \biggr)_{m=0}^{\bar n} \biggr\|_p \le \sum_{m=0}^{\bar n} \left\| \left(\alpha_j\right)_{j=0}^m \right\|_p\,,
\end{equation}
which is just the triangle inequality for the sum of vectors in $\R^{\bar n+1}$
$$
\begin{pmatrix} 0 \\ \vdots \\ 0 \\ \alpha_0 \end{pmatrix} +
\begin{pmatrix} 0 \\ \vdots  \\ \alpha_0 \\ \alpha_1 \end{pmatrix} + \ldots +
\begin{pmatrix} 0 \\ \alpha_0 \\ \vdots  \\ \alpha_{\bar n-1} \end{pmatrix} +
\begin{pmatrix} \alpha_0 \\ \alpha_1 \\ \vdots  \\ \alpha_{\bar n} \end{pmatrix}.
$$
We thus obtain, for $0\le \bar n \le N$,
\begin{align*}
&  \bigg\|\bigg(\frac{\vec e_n-\vec e_{n-1}}{\tau}\bigg)_{n=0}^{\bar n}\bigg\|_{L^p(L^q(\varOmega)^s)} 
       +  \big\|(\vec e_n)_{n=0}^{\bar n}\big\|_{L^p(W^{2,q}(\varOmega)^s)} %\nonumber
       \\
      & \qquad\qquad\qquad\leq C  \biggl( 
      \tau\sum_{m=0}^{\bar n} \bigg\|\bigg(\frac{\vec e_n-\vec e_{n-1}}{\tau}\bigg)_{n=0}^{m} \bigg\|_{L^p(L^q(\varOmega)^s)} 
     % \big\|(\vec e_{n})_{n=0}^N\big\|_{L^p(L^q(\varOmega)^s)}
      + \big\|(\vec r_{n})_{n=0}^{\bar n}\big\|_{L^p(L^q(\varOmega)^s)} \biggr).
\end{align*}
Applying a discrete Gronwall inequality then yields
\begin{align}\label{e-est}
  \bigg\|\bigg(\frac{\vec e_n-\vec e_{n-1}}{\tau}\bigg)_{n=0}^N\bigg\|_{L^p(L^q(\varOmega)^s)} 
       +  \big\|(\vec e_n)_{n=0}^N\big\|_{L^p(W^{2,q}(\varOmega)^s)} %\nonumber
       \leq \widetilde C 
       \big\|(\vec r_{n})_{n=0}^N\big\|_{L^p(L^q(\varOmega)^s)} ,
\end{align}
and the result follows with the bound \eqref{r-bound}.
\QED\end{proof}

\subsection{Existence of the numerical solution}

In this subsection, we prove the existence of a solution 
$\vec e_{n}$ for \eqref{err-eq} satisfying the error bound \eqref{e-est} by using Schaefer's fixed point theorem via the arguments of the proof of Lemma~\ref{lem:error-bound}, which rely on the maximal regularity properties of Section~\ref{sec:maxreg}.

\begin{lemma}[Schaefer's fixed point theorem {\cite[Chapter 9.2, Theorem 4]{Evans}}]
%{\rm(Schaefer's fixed point theorem \cite[Chapter 9.2, Theorem 4]{Evans})} 
\label{THMSchaefer}
Let $X$ be a Banach space and let ${\mathcal M}:X\rightarrow X$ be 
a continuous and compact map. If the set 
\begin{equation}
\bigl\{\phi\in X:\; \phi=\theta {\mathcal M}(\phi) \ \hbox{ for some }\, 
\theta\in [0,1] \bigr\}
\end{equation}
is bounded in $X$, then the map ${\mathcal M}$ has a fixed point.
\end{lemma}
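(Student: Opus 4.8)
The plan is to deduce the statement from the classical Schauder fixed point theorem together with a radial retraction that confines the problem to a fixed ball. By hypothesis the set
$$
S = \bigl\{\phi\in X:\; \phi=\theta {\mathcal M}(\phi) \ \text{ for some }\, \theta\in [0,1] \bigr\}
$$
is bounded, so I first fix a radius $R$ with $\|\phi\|<R$ for every $\phi\in S$. I then introduce the closed ball $\overline{B}_R=\{x\in X:\|x\|\le R\}$ and the radial retraction $r:X\to \overline{B}_R$ defined by $r(x)=x$ when $\|x\|\le R$ and $r(x)=Rx/\|x\|$ when $\|x\|>R$. This map is continuous and sends all of $X$ into $\overline{B}_R$, which is exactly the device needed to turn a self-map of the whole space into a self-map of a compact convex set.

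Next I would form the composition ${\mathcal N}=r\circ{\mathcal M}:\overline{B}_R\to \overline{B}_R$. Because ${\mathcal M}$ is continuous and compact and $r$ is continuous, ${\mathcal N}$ is a continuous map whose range lies in the relatively compact set $r\bigl({\mathcal M}(\overline{B}_R)\bigr)$; in particular ${\mathcal N}$ is a compact self-map of the nonempty, closed, bounded, convex set $\overline{B}_R$. Schauder's fixed point theorem then yields a point $u\in \overline{B}_R$ with $u={\mathcal N}(u)=r\bigl({\mathcal M}(u)\bigr)$.

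It remains to upgrade this to a genuine fixed point of ${\mathcal M}$, and this is precisely where the boundedness of $S$ is used. If $\|{\mathcal M}(u)\|\le R$, then $r$ acts as the identity at ${\mathcal M}(u)$ and $u={\mathcal M}(u)$, as required. Otherwise $\|{\mathcal M}(u)\|>R$, so $u=R{\mathcal M}(u)/\|{\mathcal M}(u)\|=\theta\,{\mathcal M}(u)$ with $\theta=R/\|{\mathcal M}(u)\|\in(0,1)$; hence $u\in S$, which forces $\|u\|<R$, while the same formula gives $\|u\|=R$, a contradiction. Thus the second alternative cannot occur and $u$ is a fixed point of ${\mathcal M}$.

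The main obstacle I anticipate is the reduction itself: ${\mathcal M}$ is only assumed to be a self-map of the entire space $X$, whereas Schauder's theorem requires a compact convex domain, so the retraction $r$ and the verification that ${\mathcal N}=r\circ{\mathcal M}$ genuinely satisfies the Schauder hypotheses (compactness of the composite self-map of $\overline{B}_R$) carry the essential weight. Once that reduction is secured, the concluding dichotomy is an elementary consequence of the a priori bound on $S$, since the contradiction rules out the truncated case and leaves only $u={\mathcal M}(u)$.
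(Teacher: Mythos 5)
Your proof is correct and coincides with the paper's approach: the paper gives no proof of this lemma at all, citing it from Evans \cite{Evans}, and your argument (radial retraction $r$ onto $\overline{B}_R$, Schauder's theorem applied to the compact self-map ${\mathcal N}=r\circ{\mathcal M}$ of the closed ball, then the dichotomy in which the truncated case forces $\|u\|=R$ while $u=\theta{\mathcal M}(u)$ with $\theta\in(0,1)$ places $u$ in the bounded set and forces $\|u\|<R$) is precisely the standard proof found in that reference. All the hypotheses of Schauder in the compact-map form are verified correctly, so nothing is missing.
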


We define a map ${\mathcal M}: 
C([0,T],W^{1,\infty}(\varOmega )^s)
\rightarrow C([0,T],W^{1,\infty}(\varOmega )^s)$ in the following way: 
for any given $\vec\varphi=(\varphi_{j})_{j=1}^{s}
\in C([0,T],W^{1,\infty}(\varOmega )^s)$, we define 
$\vec e:={\mathcal M} \vec\varphi$ 
as the piecewise linear interpolation in time of the vectors $\vec e_n=(e_{n,i})_{i=1}^s$ for $n=0,\dots,N$ (that is, interpolating linearly between $e_{n,i}$ and $e_{n-1,i}$ for each $i$), where  $\vec e_n=(e_{n,i})_{i=1}^s$ are the solution of the {\it linear} problem 
\begin{subequations}\label{err-eq-M}
\begin{align}
\label{err-eq-1-M}
 &e_{n,i} = e_n + \tau \sum_{j=1}^s a_{ij} \dot e_{n,j} ,\qquad\quad e_{n+1}=e_{n,s}
 \\
\label{err-eq-2-M}
& \dot e_{n,j} = \sum_{k,l=1}^d f_{k,l}\bigl(\nabla u(t_{n,j})\bigr) \partial_k\partial_l e_{n,j} 
 \\ & +
 \sum_{k,l=1}^d \Bigl(f_{k,l}\bigl(\nabla u(t_{n,j}) + \beta(\varphi_j(t_{n,j}))\nabla \varphi_j(t_{n,j})\bigr) - f_{k,l}\bigl(\nabla u(t_{n,j})\bigr) \Bigr) \times\nonumber\\[-3mm]
&\hspace{7.5cm}\partial_k\partial_l   \bigl( u(t_{n,j}) + e_{n,j} \bigr) \nonumber\\
&+r_{n,j},
\nonumber
\end{align}
\end{subequations}
where 
$$
\beta(\varphi) = \min\Bigl( \frac{\sqrt\rho}{\|\varphi\|_{W^{1,\infty}(\varOmega)}}, 1\Bigr),
$$
which has the following properties: 
\begin{subequations}
\begin{align} 
&\|\beta(\varphi) \varphi\|_{W^{1,\infty}(\varOmega )} 
\le \sqrt{\rho} ,\\
&\beta(\varphi)=1\quad\mbox{if}\quad  
\|\varphi\|_{W^{1,\infty}(\varOmega )}\le \sqrt{\rho} .
\end{align}
\end{subequations}

\begin{lemma} \label{lem:M}
The map ${\mathcal M}: 
C([0,T],W^{1,\infty}(\varOmega )^s)
\rightarrow C([0,T],W^{1,\infty}(\varOmega )^s)$
is well defined, continuous and compact. 
\end{lemma}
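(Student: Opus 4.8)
The plan is to verify the three properties of $\mathcal{M}$ in turn: well-definedness, continuity, and compactness. For well-definedness, I would first observe that for any fixed $\vec\varphi\in C([0,T],W^{1,\infty}(\varOmega)^s)$, the inhomogeneity
$$
\widetilde g_{n,j} = \sum_{k,l=1}^d \Bigl(f_{k,l}\bigl(\nabla u(t_{n,j}) + \beta(\varphi_j(t_{n,j}))\nabla\varphi_j(t_{n,j})\bigr) - f_{k,l}\bigl(\nabla u(t_{n,j})\bigr)\Bigr)\partial_k\partial_l\bigl(u(t_{n,j})+e_{n,j}\bigr)
$$
in \eqref{err-eq-2-M} depends on the unknown $e_{n,j}$ only through the factor $\partial_k\partial_l e_{n,j}$, so that \eqref{err-eq-M} is genuinely a \emph{linear} system for $\vec e_n$. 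Collecting the $e_{n,j}$-dependent terms on the left and the $\varphi$-dependent and $r_{n,j}$ terms on the right, the system takes the form of a Runge--Kutta discretization \eqref{lin-rk-t} of a nonautonomous linear parabolic problem whose elliptic operators $A(t_{n,j})$ have leading coefficients $f_{k,l}(\nabla u(t_{n,j}))+\beta(\varphi_j(t_{n,j}))\,[\,\cdots\,]$. I would check that these coefficients satisfy assumptions (A1), (A2) and the Lipschitz condition \eqref{Lip-t} uniformly in $\vec\varphi$: ellipticity follows from \eqref{locally-elliptic} together with the smallness $\|\beta(\varphi_j)\nabla\varphi_j\|_{W^{1,\infty}}\le\sqrt\rho$ (so the perturbation of the coefficients is $O(\sqrt\rho)$ and does not destroy positive-definiteness once $\tau$, hence $\rho$, is small), and the H\"older and Lipschitz bounds follow from the regularity \eqref{Regularity} of $u$ and smoothness of $f$. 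Lemma~\ref{lem:rk-maxreg-t} then guarantees a unique solution $\vec e_n$ with the discrete maximal regularity bound \eqref{un-un-1LpW2qD}, and \eqref{DImbedding} gives the $L^\infty(W^{1,\infty})$ control, so $\mathcal{M}\vec\varphi$ lies in the target space.

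For continuity, I would take $\vec\varphi^{(m)}\to\vec\varphi$ in $C([0,T],W^{1,\infty}(\varOmega)^s)$ and show $\mathcal{M}\vec\varphi^{(m)}\to\mathcal{M}\vec\varphi$. Writing the difference of the two solutions, the $e_{n,j}$-dependent parts again form a linear Runge--Kutta system to which Lemma~\ref{lem:rk-maxreg-t} applies, while the right-hand side is the difference of the two inhomogeneities. The map $\varphi\mapsto\beta(\varphi)\varphi$ is continuous from $W^{1,\infty}$ into the ball of radius $\sqrt\rho$, and since $f_{k,l}$ is smooth (hence locally Lipschitz) and $u$ is fixed and regular, the inhomogeneity depends continuously on $\vec\varphi$ in $L^p(L^q(\varOmega)^s)$; thus the right-hand side tends to zero, and the maximal regularity estimate followed by \eqref{DImbedding} transfers this to convergence in $C([0,T],W^{1,\infty}(\varOmega)^s)$.

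For compactness, the key is the compact embedding already recorded in Lemma~\ref{lem:sobolev}. Given a bounded set of inputs $\vec\varphi$, the outputs $\mathcal{M}\vec\varphi$ satisfy, by the same maximal regularity argument, a uniform bound on
$$
\bigg\|\bigg(\frac{\vec e_n-\vec e_{n-1}}{\tau}\bigg)_{n=0}^N\bigg\|_{L^p(L^q(\varOmega)^s)} + \big\|(\vec e_n)_{n=0}^N\big\|_{L^p(W^{2,q}(\varOmega)^s)},
$$
independent of the input. Interpreting the piecewise linear interpolant as an element of $W^{1,p}(0,T;L^q(\varOmega)^s)\cap L^p(0,T;W^{2,q}(\varOmega)^s)$, the compact embedding of Lemma~\ref{lem:sobolev} into $C([0,T];W^{1,\infty}(\varOmega)^s)$ shows that the image of any bounded set is relatively compact. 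The main obstacle I anticipate is the well-definedness step: one must carefully recast \eqref{err-eq-M} as an instance of \eqref{lin-rk-t} and verify that the perturbed coefficients genuinely satisfy (A1), (A2) and \eqref{Lip-t} with constants independent of $\vec\varphi$ and of $n$. The cutoff $\beta$ is exactly what makes this possible, since it caps the perturbation at $\sqrt\rho$ so ellipticity survives; once this verification is in place, continuity and compactness are relatively routine consequences of Lemma~\ref{lem:rk-maxreg-t} and Lemma~\ref{lem:sobolev}.
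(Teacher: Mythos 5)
There is a genuine gap, and it sits exactly where you anticipated trouble: the well-definedness step. You propose to absorb the $\vec\varphi$-dependent difference into the elliptic operator, i.e.\ to view \eqref{err-eq-M} as an instance of \eqref{lin-rk-t} with leading coefficients $f_{k,l}\bigl(\nabla u(t_{n,j})+\beta(\varphi_j(t_{n,j}))\nabla\varphi_j(t_{n,j})\bigr)$, and then to verify (A1), (A2) and \eqref{Lip-t} for these coefficients uniformly in $\vec\varphi$. That verification cannot succeed. The input $\vec\varphi$ lies only in $C([0,T],W^{1,\infty}(\varOmega)^s)$, so $\nabla\varphi_j(t_{n,j})$ is merely bounded measurable in $x$ — a $W^{1,\infty}$ function has no H\"older-continuous gradient — and merely continuous (not Lipschitz) in $t$. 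Hence the perturbed coefficients satisfy neither the H\"older bound (A1) nor the time-Lipschitz condition \eqref{Lip-t}, let alone with constants uniform over a bounded set of inputs (which uniformity you would also need for the compactness step). The cutoff $\beta$ caps the \emph{size} of the perturbation at $\sqrt\rho$, which indeed preserves ellipticity (A2), but it does nothing to restore spatial H\"older or temporal Lipschitz regularity. Without (A1), Lemma~\ref{lem:R-bound} and Lemma~\ref{lem:rk-maxreg-t} are unavailable, and even the elliptic estimate \eqref{W2q-est} fails: $W^{2,q}$ solvability of non-divergence operators with merely $L^\infty$ coefficients is false in general. So the maximal regularity machinery cannot be applied to your perturbed operator, and the same defect propagates into your continuity argument, where you again invoke Lemma~\ref{lem:rk-maxreg-t} for a system whose coefficients involve $\nabla\varphi$.

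The paper's proof avoids this by never putting the perturbation into the operator. The principal part is kept frozen along the \emph{exact} solution, $A(t_{n,j})=\sum_{k,l}f_{k,l}(\nabla u(t_{n,j}))\partial_k\partial_l$, whose coefficients satisfy (A1), (A2), \eqref{Lip-t} thanks to \eqref{Regularity} (with $q>d$, so $W^{2,q}(\varOmega)\hookrightarrow C^{1,\mu}(\overline\varOmega)$) and the smoothness and local ellipticity of $f$; the entire $\vec\varphi$-dependent difference term, including its $\partial_k\partial_l e_{n,j}$ part, is treated as an inhomogeneity, exactly as $g_{n,j}$ was treated in the proof of Lemma~\ref{lem:error-bound}. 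The only change relative to \eqref{g-bound} is that $\|\nabla e_{n,j}\|_{L^\infty(\varOmega)}$ is replaced by $\|\beta(\varphi_j(t_{n,j}))\nabla\varphi_j(t_{n,j})\|_{L^\infty(\varOmega)}\le\sqrt\rho$, so the resulting term $C\sqrt\rho\,\|(\vec e_n)_{n=0}^N\|_{L^p(W^{2,q}(\varOmega)^s)}$ is absorbed into the left-hand side of \eqref{un-un-1LpW2qD} for small $\tau$; this simultaneously yields the a priori bound (hence, via a continuity/Neumann-series argument for the small relatively bounded perturbation, solvability of the linear stage equations), and continuity of $\mathcal M$ follows by the same absorbed-inhomogeneity estimate applied to the difference of two solutions. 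Your compactness step — uniform bound in $W^{1,p}(0,T;L^q(\varOmega)^s)\cap L^p(0,T;W^{2,q}(\varOmega)^s)$ for the piecewise linear interpolant plus the compact embedding of Lemma~\ref{lem:sobolev} — is correct and is exactly what the paper does; once the well-definedness and continuity arguments are rerouted as above, the rest of your proposal stands.
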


\begin{proof}$\quad$
 Following the lines of the proof of Lemma~\ref{lem:error-bound}, with the only difference that 
 $ \|\nabla e_{n,j}\|_{L^\infty(\varOmega)}$ is replaced with 
 $\|\beta(\varphi_j(t_{n,j}))\nabla \varphi_j(t_{n,j})\|_{L^\infty(\varOmega)}\le \sqrt\rho$ in \eqref{g-bound},
 it is seen that $\mathcal{M}$ maps boundedly into the space
 $W^{1,p}(0,T;L^q(\varOmega)^s) \cap L^p (0,T; W^{2,q}(\varOmega)^s)$, which is compactly embedded in
 $C([0,T],W^{1,\infty}(\varOmega )^s)$ by Lem\-ma~\ref{lem:sobolev}. The continuity of $\mathcal{M}$ is also obtained by the arguments used in the proof of
 Lemma~\ref{lem:error-bound}.
\QED\end{proof}

To apply Schaefer's fixed point theorem (Lemma \ref{THMSchaefer}), 
we assume that 
\begin{align*}
\vec \varphi 
=\theta {\mathcal M}\vec \varphi 
\quad\mbox{for some $\theta\in[0,1]$}.
\end{align*}
Then $\vec e:={\mathcal M}\vec \varphi $ 
is the piecewise linear interpolation of the solution of the equations \eqref{err-eq-M} with $\varphi_j=\theta e_{j}$. Using the same proof as that of Lemma~\ref{lem:error-bound}, it is now seen that $\vec e$ satisfies
$O(\rho)=O(\tau^k)$ error bounds \eqref{err-est}. This implies that $\|\vec \varphi\|_{W^{1,\infty}(\varOmega)^s}\le C\rho$ (note that then $\beta(\varphi_j)=1$), and hence Schaefer's fixed point theorem yields the existence of a solution to the error equations~\eqref{err-eq} satisfying \eqref{err-est}.

\subsection{Uniqueness of the numerical solution}

The stability result of Lemma~\ref{lem:error-bound}, that is, the bound \eqref{e-est} used with ${\vec r_n=0}$,
 implies the local uniqueness of the Runge-Kutta solution in an  $W^{1,\infty}(\varOmega)$ neighbourhood of width $\mu$ (sufficiently small but independent of the stepsize $\tau$).
 
\section{Proof of Theorem~\ref{thm:main-2}} \label{sec:energy} 
The proof is similar to previous proofs of error bounds for Runge--Kutta time discretizations of parabolic problems using energy estimates \cite{LO,DLM}. In particular, the same use is made of the algebraic stability condition \eqref{alg-stab}. However, the proof differs in that here we need to invoke the $W^{1,\infty}(\varOmega)$ error bounds provided by Theorem~\ref{thm:main}.

\subsection{Defects}
We denote the exact solution values  $u_{n,i}^*=u(t_n+c_i\tau)$, $\dot u_{n,i}^*=\partial_t u(t_n+c_i\tau)$, and $u_{n}^* =u(t_n)$. Note that $u_{n+1}^*=u_{n,s}^*$ by our condition $c_s=1$. We denote by $d_{n,i}$ and $d_{n+1}$ the defects obtained on inserting the exact solution into the Runge--Kutta equations,
$$
u_{n,i}^* = u_n^* + \tau \sum_{j=1}^s a_{ij} \dot u_{n,j}^* + d_{n,i}, \qquad
u_{n+1}^* = u_n^* + \tau \sum_{j=1}^s b_j \dot u_{n,j}^* + d_{n+1}.
$$
The defects are thus quadrature errors.
By Taylor expansion at $t_n$ and the definition of the stage order \eqref{stage-order} and by condition \eqref{order}, the defects are of the form
\begin{align*}
d_{n,i} &= \tau^k \int_{t_n}^{t_{n+1}} K_i\Bigl(\frac{t-t_n}\tau\Bigr) \, u^{(k+1)}(t)\,\d t \\
d_{n+1} &= \tau^{k+1} \int_{t_n}^{t_{n+1}} K\Bigl(\frac{t-t_n}\tau\Bigr) \, u^{(k+2)}(t)\,\d t \\
&=
- \tau^k \int_{t_n}^{t_{n+1}} K'\Bigl(\frac{t-t_n}\tau\Bigr) \, u^{(k+1)}(t)\,\d t
\end{align*}
with bounded Peano kernels $K_i$ and $K$. Here we assume for simplicity that all $c_i\in[0,1]$, as is the case for all methods of interest. In the following we denote  
by $\langle\cdot,\cdot\rangle$ the duality pairing between $H^1_0(\varOmega)$ and $H^{-1}(\varOmega)$, which restricted to
$L^2(\varOmega)\times L^2(\varOmega)$ coincides with the $L^2(\varOmega)$ inner product. We further denote
$$
|\cdot|=\|\cdot\|_{L^2(\varOmega)}, \quad \|\cdot \| = \|\cdot\|_{H^1_0(\varOmega)},
\quad \|\cdot \|_\star = \|\cdot\|_{H^{-1}(\varOmega)}.
$$
We define $\delta\ge 0$ by setting
\begin{equation}\label{delta}
\delta^2 = \tau\sum_{n=0}^N \sum_{i=1}^s \| d_{n,i} \|^2 + \tau \sum_{n=0}^N
\bigl( \| d_{n+1} \|^2 + \| d_{n+1}/\tau \|_\star^2 \bigr)
\end{equation}
and note that by our regularity assumption and the above defect estimates we have
$$
\delta \le C\tau^{k+1}.
$$

\subsection{Error equations}
The errors $e_{n,i}=u_{n,i}-u_{n,i}^*$, $\dot e_{n,i}=\dot u_{n,i}-\dot u_{n,i}^*$, and
$e_n=u_n-u_n^*$ satisfy the error equations (with $f_{k,l}=\partial f_k/\partial p_l$)
\begin{subequations}
\begin{align}
 \dot e_{n,i} &= \sum_{k,l=1}^d f_{k,l}(\nabla u_{n,i}^*) \partial_k\partial_l e_{n,i} +
\sum_{k,l=1}^d \bigl( f_{k,l}(\nabla u_{n,i}) - f_{k,l}(\nabla u_{n,i}^*) \bigr)  \partial_k\partial_lu_{n,i}
\label{err-a}
\\
e_{n,i} &= e_n + \tau \sum_{j=1}^s a_{ij} \dot e_{n,j} - d_{n,i}
\label{err-b}
\\
e_{n+1} &= e_{n} + \tau \sum_{i=1}^s b_{i} \dot e_{n,i} - d_{n+1}.
\label{err-c}
\end{align}
\end{subequations}

\subsection{Energy estimate using algebraic stability}
Taking the square of the $L^2(\varOmega)$ norm in \eqref{err-c} yields
\begin{equation}\label{err-square}
|e_{n+1}|^2 = \bigl| e_{n} + \tau \sum_{i=1}^s b_{i} \dot e_{n,i} \bigr|^2 -
2\bigl\langle d_{n+1}, e_{n} + \tau \sum_{i=1}^s b_{i} \dot e_{n,i} \bigr\rangle +
|d_{n+1}|^2.
\end{equation}
The three terms on the right-hand side will now be estimated separately. We express $e_n$ by \eqref{err-b} to obtain
\begin{align*}
\bigl| e_{n} + \tau \sum_{i=1}^s b_{i} \dot e_{n,i} \bigr|^2 &= |e_n|^2 +
2\tau \sum_{i=1}^s b_i \langle \dot e_{n,i}, e_{n,i}+d_{n,i} \rangle \\
& +\ \tau^2 \sum_{i=1}^s \sum_{j=1}^s (b_ib_j - b_i a_{ij} - b_j a_{ji})\, \langle \dot e_{n,i},\dot e_{n,j} \rangle.
\end{align*}
Here the last term is nonpositive by the algebraic stability condition \eqref{alg-stab}. We next estimate the second term on the right-hand side. Omitting momentarily all subscripts $n,i$ for clarity of notation, we have by \eqref{err-a} 
\begin{align} \label{eed}
\langle \dot e, e+d \rangle &= \Bigl\langle \sum_{k,l=1}^d f_{k,l}(\nabla u^*) \partial_k\partial_l e, e+d \Bigr\rangle \\
&+ \Bigl\langle \sum_{k,l=1}^d \bigl( f_{k,l}(\nabla u) - f_{k,l}(\nabla u^*) \bigr)  \partial_k\partial_l u, e+d \Bigr\rangle.
\nonumber
\end{align}
For the first term on the right-hand side we use partial integration to write
\begin{align*}
&\Bigl\langle \sum_{k,l=1}^d f_{k,l}(\nabla u^*) \partial_k\partial_l e, e+d \Bigr\rangle  =
-  \sum_{k,l=1}^d \Bigl\langle \partial_l e, \partial_k \bigl( f_{k,l}(\nabla u^*)(e+d) \bigr)\Bigr\rangle 
\\
&= - \sum_{k,l=1}^d \Bigl\langle  \partial_l e, f_{k,l}(\nabla u^*) (\partial_k e + \partial_k d)  \Bigr\rangle 
 - \sum_{k,l=1}^d\Bigl\langle  \partial_l e,\partial_k \bigl( f_{k,l}(\nabla u^*) \bigr) (e+d)  \Bigr\rangle
 \\
 &\equiv I_1 +I_2.
\end{align*}
Under the regularity condition \eqref{Regularity} about the exact solution we have a bound $\| u^* \|_{W^{1,\infty}(\varOmega)} \le R < \infty$, and hence there exists $\kappa_R>0$ such that we have for all $x\in\varOmega$
$$
\sum_{k,l=1}^d f_{k,l}(\nabla u^*(x)) \xi_k \xi_l \ge \kappa_R \sum_{l=1}^d \xi_l^2 \qquad \forall \xi=(\xi_l)\in\R^d,
$$
and there are $K_R',K_R''<\infty$ such that for all $x\in\varOmega$
$$
|f_{k,l}(\nabla u^*(x))| \le K_R', \quad |f_{k,lm}(\nabla u^*(x))| \le K_R''.
$$
Hence we have
$$
I_1
\le -\kappa_R \| e \|^2 + d K_R' \| e \|\, \|d \| \le -\tfrac12 \kappa_R \| e \|^2 + C_R \| d \|^2
$$
with a suitable $C_R$ (which depends on $\kappa_R$ and $K_R'$).
By the regularity condition \eqref{Regularity}, also the $W^{2,q}$ norm of $u^*$ is bounded. We obtain with H\"older's inequality, for $r$ such that $\frac12 +\frac1q +\frac1r=1$,
$$
I_2 \le  K_R'' \, \| \nabla e \|_{L^2} \, \| u^* \|_{W^{2,q}} \, \| e+d \|_{L^r}.
$$
For $q>d$, we have $2d/(d-2)>2q/(q-2)=r>2$, and so 
$H^1_0(\varOmega)$ is compactly embedded into 
$L^r(\varOmega)$.  
In this case, for every $\eps>0$ there exists $C_\eps<\infty$ such that for all $v\in H^1_0(\varOmega)$,
$$
\| v \|_{L^r} \le \eps \| \nabla v \|_{L^2} + C_\eps \| v \|_{L^2}.
$$
We conclude that there exists a constant $C$ such that
$$
I_1+I_2 \le - \tfrac13  \kappa_R \| e \|^2 + C \| d \|^2.
$$
To bound the second term on the right-hand side of \eqref{eed} we use Theorem~\ref{thm:main}.
This ensures us that the ${W^{1,\infty}(\varOmega)}$ and the $W^{2,q}(\varOmega)$ norm of the numerical approximation are bounded (independently of $\tau$), and so we can use the local Lipschitz continuity of $f_{k,l}$ and an $W^{2,q}(\varOmega)$ bound of the numerical solution. We thus obtain, for arbitrary $\eps>0$ and for $r$ such that $\frac12 +\frac1q +\frac1r=1$,
\begin{align*}
\Bigl\langle \sum_{k,l=1}^d \bigl( f_{k,l}(\nabla u) &- f_{k,l}(\nabla u^*) \bigr)  \partial_k\partial_l u, e+d \Bigr\rangle 
\\[-2mm]
&\le \widetilde C_R \,\| \nabla e \|_{L^2(\varOmega)} \, \| u \|_{W^{2,q}(\varOmega)} \, \| e+d \|_{L^r(\varOmega)} \\
&\le \eps \|  e \|^2+ C_\eps \| d \|^2.
\end{align*}
Combining the above bounds thus yields (taking up again the dropped subscripts $n,i$)
$$
\langle \dot e_{n,i}, e_{n,i} + d_{n,i} \rangle \le -\tfrac14\kappa_R  \| e_{n,i} \|^2 + C \| d_{n,i} \|^2.
$$
With the same arguments, again invoking Theorem~\ref{thm:main}, we also obtain from \eqref{err-a} (with a different constant $C$)
$$
\| \dot e_{n,i} \|_* \le C \, \| e_{n,i} \|.
$$
%From here on, the proof proceeds in the same way as  the proof of \cite[Theorem 1.1]{LO}, with some simplifications. 
The second term in \eqref{err-square} is estimated as (note that all $b_i>0$)
\begin{align*}
\bigl\langle d_{n+1}, e_{n} + \tau \sum_{i=1}^s b_{i} \dot e_{n,i} \bigr\rangle \le
 \sqrt\tau\| d_{n+1}/\tau \|_*\, \sqrt\tau\| e_n \| + \sqrt\tau\| d_{n+1} \|\, \sqrt\tau \sum_{i=1}^s b_i \| \dot e_{n,i} \|_*,
\end{align*}
and 
$$
|d_{n+1}|^2 \le \sqrt\tau\|d_{n+1}/\tau\|_*\, \sqrt\tau\|d_{n+1}\| \le \tfrac12 \tau\, \|d_{n+1}\|^2 +\tfrac12 \tau\, \|d_{n+1}/\tau\|_*^2\,.
$$
Combining the above estimates (and noting that $e_n=e_{n-1,s}$) we obtain
\begin{align*}
 |e_{n+1}|^2 - |e_n|^2 &+ \tfrac18\kappa_R \tau \sum_{i=1}^s b_i \|e_{n,i}\|^2 \\
 &\le
 C\tau\sum_{i=1}^s \| d_{n,i}\|^2 + C\tau (\|d_{n+1}\|^2 + \|d_{n+1}/\tau\|_*^2).
\end{align*}
Summing up these inequalities and recalling \eqref{delta} yields
$$
|e_{n+1}|^2 + \tfrac18\kappa_R \tau \sum_{m=0}^n \sum_{i=1}^s b_i \|e_{m,i}\|^2 \le C\delta,
$$
which completes the proof.\QED

%
%
% -\bigl\langle \sum_{k,l=1}^d f_{k,l}(\nabla u^*)  \partial_k e, \partial_l e \bigr\rangle 
%- \bigl\langle \sum_{k,l=1}^d \partial_k\bigl(f_{k,l}(\nabla u^*)\bigr) e,
%
%
%

%\section{Conclusion}
%Our techniques and results can also be extended directly to 
%fully nonlinear parabolic equations, such as 
%\begin{equation}
%\label{Fully_nonlinear}
%\left \{
%\begin{alignedat}{3} 
%&\frac{\partial u}{\partial t} = F(\nabla^2 u,\nabla u,u) , 
%\quad & \text{in }\ &\varOmega\times [0,T],\\
%&u =0, \quad & \text{on }\ &\partial \varOmega\times [0,T],\\
%&u (\cdot,0)=u_0 , & \text{in }\  &\varOmega , 
%\end{alignedat}
%\right .
%\end{equation}
%%
%where the matrix $\partial_{Q_{ij}}F(Q,\vec p,u)$ is positive definite 
%for bounded values of $Q\in\R^{d\times d}$, $\vec p\in\R^d$ and 
%$u\in\R$, provided the exact solution of the problem is smooth enough.
%
%For example, if we consider the case 
%$F(\nabla^2 u,\nabla u,u)=F(\nabla^2 u)$ and discretize the problem by 
%the backward Euler scheme, 
%\begin{equation} 
%\frac{u_n-u_{n-1}}{\tau} = F(\nabla^2 u_n) , 
%\end{equation}
%then the error $e_n=u(t_n)-u_n$ is a solution of 
%\begin{equation} 
%\frac{e_n-e_{n-1}}{\tau} 
%= \partial_{Q_{ij}}F(\nabla^2 u_n)\partial_{ij}e_n
%+b_{ij}(\nabla^2 u_n,\nabla^2 e_n)\partial_{ij}e_n , 
%\end{equation} 
%where
%\begin{align}
%b_{ij}(\nabla^2 u_n,\nabla^2 e_n)
%=\partial_{Q_{ij}}F(\nabla^2 u_n-\theta\nabla^2 e_n)
%-\partial_{Q_{ij}}F(\nabla^2 u_n) \d x 
%\end{align}
%is a small term. 

\bigskip

%\begin{acknowledgements}
{\bf Acknowledgements.} 
The research stay of Buyang Li at the University of T\" ubingen
is funded by the Alexander von Humboldt Foundation. Peer Kunstmann and Christian Lubich are supported by Deutsche Forschungsgemeinschaft, SFB 1173. We thank Christian Power Guerra for the interpretation of \eqref{triangle-inequality} as a triangle inequality.
%\end{acknowledgements}

\bibliographystyle{amsplain}

\end{document}